\documentclass[12pt,a4paper]{amsart}

\usepackage{amscd,amssymb,amsthm}
\usepackage{graphicx}

\def\ds{\displaystyle}
\vfuzz2pt 
\hfuzz2pt 

\numberwithin{equation}{section}

\theoremstyle{plain}
\newtheorem{theorem}{Theorem}[section]
\newtheorem{corollary}[theorem]{Corollary}
\newtheorem{lemma}[theorem]{Lemma}
\newtheorem{proposition}[theorem]{Proposition}

\theoremstyle{definition}

\theoremstyle{remark}

\numberwithin{equation}{section}

\numberwithin{table}{section}

\numberwithin{figure}{section}

\setlength{\paperwidth}{210mm} \setlength{\paperheight}{297mm}
\setlength{\oddsidemargin}{0mm} \setlength{\evensidemargin}{0mm}
\setlength{\topmargin}{-20mm} \setlength{\headheight}{10mm}
\setlength{\headsep}{13mm} \setlength{\textwidth}{160mm}
\setlength{\textheight}{240mm} \setlength{\footskip}{15mm}
\setlength{\marginparwidth}{0mm} \setlength{\marginparsep}{0mm}

\setlength{\paperwidth}{210mm} \setlength{\paperheight}{297mm}
\setlength{\oddsidemargin}{0mm} \setlength{\evensidemargin}{0mm}
\setlength{\topmargin}{-20mm} \setlength{\headheight}{10mm}
\setlength{\headsep}{13mm} \setlength{\textwidth}{160mm}
\setlength{\textheight}{240mm} \setlength{\footskip}{15mm}
\setlength{\marginparwidth}{0mm} \setlength{\marginparsep}{0mm}

\def\ds{\displaystyle}

\newcommand{\bea}{\begin{eqnarray*}}
\newcommand{\eea}{\end{eqnarray*}}
\newcommand{\bean}{\begin{eqnarray}}
\newcommand{\eean}{\end{eqnarray}}

\subjclass[2011]{33D15, 41A05, 42C30.}%
\keywords{Basic hypergeometric functions, Completeness of sets of functions, interpolation.}%
\begin{document}

\title{On the zeros of the big $q$-Bessel functions and applications}
\author{Fethi BOUZEFFOUR}
\address{Department of mathematics, King Saudi University, College of Sciences\\ P. O Box 2455 Riyadh 11451, Saudi Arabia.
} \email{fbouzaffour@ksu.edu.sa}

\author{Hanen Ben Mansour }
\address{Department of mathematics, University of Carthage, Faculty of Sciences of Bizerte\\ $7021$ Tunisia}
\email{benmansourhanen52@yahoo.com} 

\begin{abstract}
This paper deals with the study of the zeros of the big $q$-Bessel
functions. In particular, we prove a new orthogonality relations for
this functions similar to the one for the classical Bessel
functions. Also we give some applications related to the sampling
theory.
\end{abstract}
\maketitle
\section{introduction}
The classical Bessel functions $ J_{\alpha}(x)$ which are defined by
\cite{wat}
\begin{eqnarray*}
\\J_{\alpha}(x):
=\frac{(x/2)^{\alpha}}{\Gamma(\alpha+1)}\  _{0}F_{1}\left(
  \begin{array}{cc}
    -\\
    \alpha+1\\
  \end{array}
 ;-\frac{x^{2}}{4} \right),
\end{eqnarray*}  satisfy the orthogonality relations
\begin{equation}
\int_0^1J_{\alpha}(j_{\alpha  n}x)J_{\alpha}(j_{\alpha  m}x)xdx=\frac{\delta_{n m}}{2}J^2_{\alpha+1}(j_{\alpha  n}x),
\end{equation}
where $\{j_{\alpha  n }\}_{n\in \mathbb{N}}$ are the zeros of $J_{\alpha}(x)$.\\ Moreover, a function  $f \in L^2((0,1);\,xdx),$  can be represented as the Fourier-Bessel series
 \begin{equation}
 f(x)=\sum_{n=0}^{\infty}c_nJ_{\alpha}(j_{\alpha n}x),
\end{equation}
where
\begin{equation}
c_n=\frac{2}{J^2_{\alpha+1}(j_{\alpha  n}x)}\int_0^1 f(x)J_{\alpha}(j_{\alpha  n}x)xdx.
\end{equation}
In the literatures there are many basic extension of the Bessel
functions $J_{\alpha}(x)$, the oldest one have been introduced by
Jackson in $1903-1905$ and rewritten in modern notation by Ismail
\cite{ism}. Other $q$-analogues can be obtained as formal limit of
the three $q$-analogues of Jacobi polynomials, i.e., of little
$q$-Jacobi polynomials, big $q$-Jacobi polynomials and Askey-Wilson
polynomials for this reason we propose to speak about little
$q$-Bessel functions, big $q$-Bessel functions and AW type
$q$-Bessel functions for the corresponding limit cases.\\Recently,
Koelink and Swartouw establish an orthogonality relations for the
little $q$-Bessel (see \cite{Koelink}). Another orthogonality
relations for Askey-Wilson functions founded by Bustoz and Suslov
(see, \cite{Bus}). In this paper we shall discuss a new
orthogonality relations for the big $q$-Bessel functions \cite{c}
\begin{equation} J_{\alpha}(x,\lambda;q^2)=\ _{1}\phi_1\left(\left. \begin{matrix}-1/x^2\\
q^{2\alpha+2}\end{matrix}\right\vert q^2;\lambda^2 x^2 q^{2\alpha+2}\right) \label{11}.
\end{equation}
In section 2, we define the big $q$-Bessel function, we give some
recurrence relations and we prove that the big $q$-Bessel is an
eigenfunction of a $q$-difference equation of second order. The
section 3, is devoted to study the zeros of the big $q$-Bessel
functions. In section 4, we show that if \ $\{j_{n}^{\alpha}\}_{n=1}^{\infty}$ \ are
the zeros of \ $J_{\alpha+1}(a,\lambda;q^{2})$, the set of
functions \
$\{J_{\alpha+1}(a,j_{n}^{\alpha};q^{2})\}_{n=1}^{\infty}$  is
a complete orthogonal system in $
L_{q}^{2}((0,1);\frac{t(-t^{2}q^{2};q^{2})_{\infty}}{(-t^{2}q^{2\alpha+4};q^{2})_{\infty}}d_qt)$.
Finally in the last section, we give a version of the sampling
theorem in the points $j_{n}^{\alpha}$.
\section{The big $q$-bessel functions}
Throughout this paper we will fix $q\in]0,1[$. For the convenience
of the reader, we provide a summary of the notations and definitions
used in this paper.\\
Let $a \in \mathbb{C}$ and $n \in\mathbb{N}$, the $q$-shifted
factorials are defined by \cite{gasper}
\begin{equation}
(a;q)_{0}:=1, \ \ \ \ \ \ (a;q)_{n}:=\prod\limits_{i=0}^{n-1}(1-aq^{i}),
\end{equation}
and
\begin{equation}
(a_{1},a_{2},...,a_{k};q)_{n}:=\prod\limits_{j=1}^{k}(a_{j};q)_{n}.
\end{equation}
We also denote
  $$ (a;q)_{\infty}= \lim\limits_{n\rightarrow \infty}(a;q)_{n}.
$$
The basic hypergeometric series $_{r}\phi_{s}$ is defined by
\cite{gasper}
\begin{equation}
   \ _{r}\phi_{s} \left(\left. \begin{matrix} \ a_{1}, a_{2}, ..., a_{r}\\
  \ b_1,b_{2},...,b_s\end{matrix} \right\vert q,z\right):=\ds\sum_{k=0}^{+\infty}\ds\frac{\left(a_{1}, a_{2},
. . . , a_{r} \ ; q \right)_{k}}{\left(q,b_{1}, b_{2}, . . . , b_{s} \
; q \right)_{k}}
(-1)^{k(1+s-r)}q^{(1+s-r)\binom{k}{2}}z^{k}.\end{equation}
whenever the series converges.\\
The $q$-derivative $D_qf(x)$ of a function $f$ is defined by
\begin{equation}
(D_{q}f)(x)=\frac{f(x)-f(qx)}{(1-q)x},\ \ \ \ \ \ \ \text{if} \ x\neq0,
\end{equation}
and \ $(D_{q}f)(0)=f'(0)$ provided $f'(0)$ exists.\\
The $q$-integral of a function $f$ from $0$ into $a$ is defined by
\begin{equation}
\int_{0}^{a}f(t) d_{q}t=(1-q)a\sum\limits_{n=0}^{\infty}f(aq^{n})q^{n}.
\end{equation}
The $q$-integration by parts formula is given by
\begin{equation}
\int_{a}^{b}D_{q^{-1}}[g(x)]f(x)d_{q}x=q[f(x) g(q^{-1}x)]_{a}^{b}-q\int_{a}^{b}D_{q}(f(x))g(x) d_{q}x.\label{int}
\end{equation}
The big $q$-Bessel functions are defined by \cite{c}
\begin{eqnarray} J_{\alpha}(x,\lambda;q^2)&=&\ _{1}\phi_1\left(\left. \begin{matrix}-1/x^2\\
q^{2\alpha+2}\end{matrix}\right\vert q^2;\lambda^2 x^2 q^{2\alpha+2}\right), \label{11}\\
&=&\sum\limits_{k=0}^{\infty}\frac{(-1)^{k}q^{2(^{k}_{2})+2k(\alpha+1)}}{(q^{2},q^{2\alpha+2};q^{2})_{k}}(-\frac{1}{x^{2}};q^{2})_{k}(\lambda x)^{2k}.\nonumber
\end{eqnarray}
For $\alpha>-1$, the functions  $J_{\alpha}(x,\lambda;q^{2})$ are
analytic in  $\mathbb{C}$ in their variables  $x$ and $\lambda$
and satisfying
\begin{equation}
 \lim_{q\rightarrow1}J_{\alpha}(\frac{x}{1-q^{2}},(1-q^{2})^{2}\lambda;q^{2})=\ _{0}F_{1}\left(
  \begin{array}{cc}
    -\\
    \alpha+1\\
  \end{array}
 ;-\frac{(2\lambda x)^{2}}{4} \right)
 =j_{\alpha}(2\lambda x)
 \end{equation}
where $j_{\alpha}(.)$  is the normalized Bessel function of order \
$\alpha$ given by
\begin{align*}
 j_{\alpha}(x)&=(x/2)^{-\alpha}\Gamma(\alpha+1)J_{\alpha}(x),\\&=\ _{0}F_{1}\left(
  \begin{array}{cc}
    -\\
    \alpha+1\\
  \end{array}
 ;-\frac{ x^{2}}{4} \right).
 \end{align*}.
\begin{proposition}
The big $q$-Bessel functions satisfy the following recurrence
difference relations
\begin{align}
 &D_{q}\big[J_{\alpha}(x,\lambda;q^{2})\big]=-\frac{\lambda^{2} q^{2\alpha+2}x}{(1-q)(1-q^{2\alpha+2})}J_{\alpha+1}(x,\lambda;q^{2}),\label{rec} \\ \label{recc}
 &D_{q^{-1}}\big[\frac{(-x^{2} q^{2};q^{2})_{\infty}}{(-x^{2}q^{2\alpha+4};q^{2})_{\infty}}J_{\alpha+1}(x,\lambda;q^{2})\big]=-\frac{x(1-q^{2\alpha+2})(-x^{2} q^{2};q^{2})_{\infty}}{(1-q^{-1})(-x^{2}q^{2\alpha+2};q^{2})_{\infty}}J_{\alpha}(x,\lambda;q^{2}).
\end{align}
\end{proposition}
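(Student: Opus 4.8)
The plan is to establish both formulas by differentiating the power series \eqref{11} term by term. The observation that makes this painless is that, for every $k$, the product $(-1/x^{2};q^{2})_{k}\,x^{2k}$ equals the polynomial $\phi_{k}(x):=\prod_{i=0}^{k-1}\bigl(x^{2}+q^{2i}\bigr)$, so that
\[
J_{\alpha}(x,\lambda;q^{2})=\sum_{k\ge 0}\frac{(-1)^{k}q^{2\binom{k}{2}+2k(\alpha+1)}}{(q^{2},q^{2\alpha+2};q^{2})_{k}}\lambda^{2k}\phi_{k}(x),
\]
and this series converges uniformly on compact sets because of the Gaussian factor $q^{2\binom{k}{2}}$; hence term-by-term $q$-differentiation is legitimate. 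Since each $\phi_{k}$ is even in $x$, the function $J_{\alpha}(\cdot,\lambda;q^{2})$ is even, so $D_{q}J_{\alpha}$ and the right-hand side of \eqref{rec} both vanish at $x=0$ and the value of the $q$-derivative at the origin causes no difficulty.

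For \eqref{rec} I would first record the elementary identities $\phi_{k}(x)=(x^{2}+q^{2k-2})\phi_{k-1}(x)$ and $\phi_{k}(qx)=q^{2k}\prod_{i=0}^{k-1}(x^{2}+q^{2i-2})=q^{2k-2}(1+q^{2}x^{2})\phi_{k-1}(x)$, which give $\phi_{k}(x)-\phi_{k}(qx)=(1-q^{2k})x^{2}\phi_{k-1}(x)$ and therefore $D_{q}\phi_{k}(x)=\frac{1-q^{2k}}{1-q}\,x\,\phi_{k-1}(x)$. Substituting this into the series, the factor $1-q^{2k}$ cancels the last factor of $(q^{2};q^{2})_{k}$; after the index shift $k\mapsto k+1$, using $\binom{k+1}{2}=\binom{k}{2}+k$ and $(q^{2\alpha+2};q^{2})_{k+1}=(1-q^{2\alpha+2})(q^{2\alpha+4};q^{2})_{k}$, the surviving series is recognised verbatim as $J_{\alpha+1}(x,\lambda;q^{2})$; collecting the constant prefactor yields \eqref{rec}.

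For \eqref{recc} I would argue in the same spirit with $D_{q^{-1}}$, now carrying along the weight $w_{\alpha}(x):=\dfrac{(-x^{2}q^{2};q^{2})_{\infty}}{(-x^{2}q^{2\alpha+4};q^{2})_{\infty}}$. From $(-x^{2};q^{2})_{\infty}=(1+x^{2})(-x^{2}q^{2};q^{2})_{\infty}$ and $(-x^{2}q^{2\alpha+2};q^{2})_{\infty}=(1+x^{2}q^{2\alpha+2})(-x^{2}q^{2\alpha+4};q^{2})_{\infty}$ one obtains $w_{\alpha}(q^{-1}x)=\frac{1+x^{2}}{1+x^{2}q^{2\alpha+2}}\,w_{\alpha}(x)$, while $\phi_{k}(q^{-1}x)=q^{-2k}\frac{x^{2}+q^{2k}}{1+x^{2}}\,\phi_{k}(x)$; multiplying these the factors $1+x^{2}$ cancel and a short computation gives
\[
D_{q^{-1}}\!\bigl[w_{\alpha}(x)\phi_{k}(x)\bigr]=\frac{-\,q^{-2k}\bigl(1-q^{2\alpha+2+2k}\bigr)}{1-q^{-1}}\cdot\frac{x\,w_{\alpha}(x)}{1+x^{2}q^{2\alpha+2}}\,\phi_{k}(x).
\]
Inserting this into the series for $w_{\alpha}(x)J_{\alpha+1}(x,\lambda;q^{2})$, the identity $\dfrac{1-q^{2\alpha+2+2k}}{(q^{2\alpha+4};q^{2})_{k}}=\dfrac{1-q^{2\alpha+2}}{(q^{2\alpha+2};q^{2})_{k}}$ together with $q^{2k(\alpha+2)}q^{-2k}=q^{2k(\alpha+1)}$ converts the $(\alpha+1)$-series into $(1-q^{2\alpha+2})\,J_{\alpha}(x,\lambda;q^{2})$; finally $\dfrac{w_{\alpha}(x)}{1+x^{2}q^{2\alpha+2}}=\dfrac{(-x^{2}q^{2};q^{2})_{\infty}}{(-x^{2}q^{2\alpha+2};q^{2})_{\infty}}$, which produces exactly the right-hand side of \eqref{recc}.

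The only genuinely delicate point is this last step: one has to track carefully how the infinite products $(-x^{2}q^{2};q^{2})_{\infty}$ and $(-x^{2}q^{2\alpha+4};q^{2})_{\infty}$ behave under the dilation $x\mapsto q^{-1}x$, and keep the reindexing of $(q^{2\alpha+2};q^{2})_{k}$ against $(q^{2\alpha+4};q^{2})_{k}$ straight; everything else is routine bookkeeping. The right to differentiate under the summation sign is guaranteed, for $\alpha>-1$, by the decay of the coefficients coming from $q^{2\binom{k}{2}}$ together with the local boundedness of $\phi_{k}(x)$ and of $w_{\alpha}(x)$.
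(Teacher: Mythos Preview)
Your proof is correct and follows essentially the same route as the paper: both arguments differentiate the series \eqref{11} term by term, using the identities $D_{q}\bigl[(-1/x^{2};q^{2})_{k}x^{2k}\bigr]=\frac{1-q^{2k}}{1-q}(-1/x^{2};q^{2})_{k-1}x^{2k-1}$ for \eqref{rec} and the analogous $D_{q^{-1}}$-identity with the weight $w_{\alpha}$ for \eqref{recc}, followed by the obvious index shift/Pochhammer reindexing. Your introduction of $\phi_{k}(x)=(-1/x^{2};q^{2})_{k}x^{2k}$ and your explicit justification of term-by-term differentiation add clarity, but the underlying argument is the paper's.
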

\begin{proof} A simple calculation show that
$$D_{q}\big[(-\frac{1}{x^{2}};q^{2})_{k}x^{2k}\big]=\frac{(1-q^{2k})}{(1-q)}(-\frac{1}{x^{2}};q^{2})_{k-1}x^{2k-1}.$$
Hence,
\begin{eqnarray}
D_{q}\big[J_{\alpha}(x,\lambda ;q^{2})\big]&=&
\sum\limits_{k=1}^{\infty}\frac{(-1)^{k}q^{2(_{2}^{k})+2k(\alpha+1)}}{(q^{2},q^{2\alpha+2};q^{2})_{k}} \lambda^{2k}D_{q}\big[(-\frac{1}{x^{2}};q^{2})_{k}x^{2k}\big]
,\nonumber\\
&=&\frac{1}{(1-q)(1-q^{2\alpha+1})}\sum\limits_{k=1}^{\infty}\frac{(-1)^{k}q^{2(_{2}^{k})+2k(\alpha+1)}}{(q^{2},q^{2\alpha+4};q^{2})_{k-1}}\lambda ^{2k}
(-\frac{1}{x^{2}};q^{2})_{k-1}x^{2k-1} \label{diffrec}.
\end{eqnarray}
Then, we obtain after making the change $k\rightarrow k+1$ in the
second member of \eqref{diffrec}.
\begin{eqnarray*}
D_{q}\big[J_{\alpha}(x,\lambda;q^{2})\big]
&=&-\frac{\lambda^{2} q^{2\alpha+2}x}{(1-q)(1-q^{2\alpha+2})}\sum\limits_{k=0}^{\infty}\frac{(-1)^{k}q^{2(^{k}_{2})+2k(\alpha+2)}}{(q^{2},q^{2\alpha+4};q^{2})_{k}}
(-\frac{1}{x^{2}};q^{2})_{k}(\lambda x)^{2k},\\
&=&-\frac{\lambda^{2} q^{2\alpha+2}x}{(1-q)(1-q^{2\alpha+2})}J_{\alpha+1}(x,\lambda;q^{2}).
\end{eqnarray*}
On the other hand from the following relation
$$ D_{q^{-1}}[\frac{(-x^{2}
q^{2};q^{2})_{\infty}}{(-x^{2}q^{2
\alpha+4};q^{2})_{\infty}}(-\frac{1}{x^{2}};q^{2})_{k}x^{2k}]=\frac{q^{-2k}(q^{2
\alpha+2+2k}-1)}{1-q^{-1}}\frac{(-x^{2}
q^{2};q^{2})_{\infty}}{(-x^{2} q^{2
\alpha+2};q^{2})_{\infty}}(-\frac{1}{x^{2}};q^{2})_{k}x^{2k+1}, $$
we obtain
 \begin{eqnarray*}
&&D_{q^{-1}}\big[\frac{(-x^{2} q^{2};q^{2})_{\infty}}{(-x^{2}q^{2\alpha+4};q^{2})_{\infty}}J_{\alpha+1}(x,\lambda;q^{2})\big]\\
&=&-\frac{x(1-q^{2\alpha+2})(-x^{2} q^{2};q^{2})_{\infty}}{(1-q^{-1})(-x^{2}q^{2\alpha+2};q^{2})_{\infty}}\sum\limits_{k=0}^{\infty}
\frac{(-1)^{k}q^{2(_{2}^{k})+2k(\alpha+1)}}{(q^{2},q^{2\alpha+2};q^{2})_{k}}(-\frac{1}{x^{2}};q^{2})_{k}(\lambda x)^{2k},\\
&=&-\frac{x(1-q^{2\alpha+2})(-x^{2} q^{2};q^{2})_{\infty}}{(1-q^{-1})(-x^{2}q^{2\alpha+2};q^{2})_{\infty}}J_{\alpha}(x,\lambda;q^{2}).
\end{eqnarray*}
\end{proof}
In the classical case the trigonometric functions $\sin(x)$ and $\cos(x)$ are related to the Bessel function $J_{\alpha}(x)$ by
$$\cos(x)=\sqrt{\frac{\pi x}{2}}J_{-\frac{1}{2}}(x),$$
$$\sin(x)=\sqrt{\frac{\pi x}{2}}J_{\frac{1}{2}}(x).$$
Similarly, there are two big $q$-trigonometric functions associated to the big $q$-Bessel function given
\begin{eqnarray*}
\cos(x,\lambda;q^{2})&=&J_{-1/2}(x,\lambda;q^{2}),\\
&=&\sum\limits_{k=0}^{\infty}\frac{(-1)^{k}q^{2(^{k}_{2})+k}}{(q;q)_{2k}}(-\frac{1}{x^{2}};q^{2})_{k}(\lambda x)^{2k},
\end{eqnarray*}
and,
\begin{eqnarray*}
\sin(x,\lambda;q^{2})&=&\frac{1}{1-q}J_{1/2}(x,\lambda;q^{2}),\\
&=&\sum\limits_{k=0}^{\infty}\frac{(-1)^{k}q^{2(^{k}_{2})+3k}}{(q;q)_{2k+1}}(-\frac{1}{x^{2}};q^{2})_{k}(\lambda x)^{2k}.
\end{eqnarray*}
We have
$$D_{q}\big[\cos(x,\lambda;q^{2})\big]=-\frac{\lambda^{2}qx}{1-q}\sin(x,\lambda;q^{2}),$$
$$D_{q^{-1}}\big[\frac{(-x^{2} q^{2};q^{2})_{\infty}}{(-x^{2}q^{3};q^{2})_{\infty}}\sin(x,\lambda;q^{2})\big]=-xq(1-q)^{2}\frac{(-x^{2} q^{2};q^{2})_{\infty}}{(-x^{2}q;q^{2})_{\infty}}\cos(x,\lambda;q^{2}).$$
Let $ L$ be the linear $ q$-difference operator defined by
$$ Lf(x)=\frac{(-x^{2}q^{2\alpha+2};q^{2})_{\infty}}{(-x^{2} q^{2};q^{2})_{\infty}x}D_{q^{-1}}\big[\frac{(-x^{2} q^{2};q^{2})_{\infty}}{(-x^{2}q^{2\alpha+4};q^{2})_{\infty}x}D_{q} f(x)\big]. $$
\begin{theorem}
The big $q$-Bessel function is solution of the $q$-difference equation:
\begin{equation*}
Lf=\frac{-\lambda^{2} q^{2\alpha+3}}{(1-q)^{2}}f.
\end{equation*}
\end{theorem}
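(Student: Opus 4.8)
The plan is to obtain $LJ_{\alpha}(x,\lambda;q^{2})$ by feeding the two recurrence relations \eqref{rec} and \eqref{recc} of the Proposition into the definition of $L$ one after the other, so that essentially no new computation is needed beyond collecting the scalar factors. The operator $L$ has been built precisely as the composition
$$
f\longmapsto D_{q}f\longmapsto \frac{(-x^{2}q^{2};q^{2})_{\infty}}{(-x^{2}q^{2\alpha+4};q^{2})_{\infty}x}D_{q}f
\longmapsto D_{q^{-1}}\!\Big[\tfrac{(-x^{2}q^{2};q^{2})_{\infty}}{(-x^{2}q^{2\alpha+4};q^{2})_{\infty}x}D_{q}f\Big]
\longmapsto Lf,
$$
and the two halves of the Proposition say exactly that the first two arrows send $J_{\alpha}$ to a multiple of $J_{\alpha+1}$ and that the third arrow sends the resulting object back to a multiple of $J_{\alpha}$.

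First I would apply the inner $D_{q}$. By \eqref{rec},
\begin{equation*}
\frac{(-x^{2}q^{2};q^{2})_{\infty}}{(-x^{2}q^{2\alpha+4};q^{2})_{\infty}x}\,D_{q}J_{\alpha}(x,\lambda;q^{2})
=-\frac{\lambda^{2}q^{2\alpha+2}}{(1-q)(1-q^{2\alpha+2})}\,\frac{(-x^{2}q^{2};q^{2})_{\infty}}{(-x^{2}q^{2\alpha+4};q^{2})_{\infty}}\,J_{\alpha+1}(x,\lambda;q^{2}),
\end{equation*}
the crucial point being that the factor $x$ produced by \eqref{rec} cancels the $x$ in the denominator of the weight, so the right-hand side is (a constant times) exactly the expression on which $D_{q^{-1}}$ acts in \eqref{recc}. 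Applying $D_{q^{-1}}$ and invoking \eqref{recc} then yields a scalar multiple of $J_{\alpha}(x,\lambda;q^{2})$ times $\dfrac{x(-x^{2}q^{2};q^{2})_{\infty}}{(-x^{2}q^{2\alpha+2};q^{2})_{\infty}}$.

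The last step is to multiply by the outer weight $\dfrac{(-x^{2}q^{2\alpha+2};q^{2})_{\infty}}{(-x^{2}q^{2};q^{2})_{\infty}x}$, which cancels all the $q$-shifted factorials and the surviving $x$, so that $LJ_{\alpha}(x,\lambda;q^{2})$ equals $J_{\alpha}(x,\lambda;q^{2})$ times the product of the two scalar constants:
\begin{equation*}
\left(-\frac{\lambda^{2}q^{2\alpha+2}}{(1-q)(1-q^{2\alpha+2})}\right)\left(-\frac{1-q^{2\alpha+2}}{1-q^{-1}}\right)
=\frac{\lambda^{2}q^{2\alpha+2}}{(1-q)(1-q^{-1})}.
\end{equation*}
Finally the elementary identity $(1-q)(1-q^{-1})=-(1-q)^{2}/q$ converts this constant into $-\lambda^{2}q^{2\alpha+3}/(1-q)^{2}$, which is the claimed eigenvalue. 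The only thing that requires care is sign bookkeeping — two minus signs coming from \eqref{rec} and \eqref{recc}, and one more from $1-q^{-1}=-(1-q)/q$ — together with checking that the three infinite products telescope as indicated; I do not expect any genuine obstacle, since $L$ was designed so that these two relations compose cleanly.
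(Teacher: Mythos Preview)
Your proposal is correct and follows essentially the same route as the paper: both arguments simply feed \eqref{rec} into \eqref{recc} inside the definition of $L$, so that $J_{\alpha}$ is sent to a scalar multiple of $J_{\alpha+1}$ and then back to a scalar multiple of $J_{\alpha}$. The paper states this in two lines without tracking the constants, whereas you spell out the cancellation of the $q$-shifted factorials and the identity $(1-q)(1-q^{-1})=-(1-q)^{2}/q$ that produces the extra factor of $q$; this is a welcome clarification but not a different idea.
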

\begin{proof}
By \eqref{rec} we have
$$
J_{\alpha+1}(x,\lambda;q^{2})=-\frac{1-q}{\lambda^{2}
q^{2\alpha+2}x}D_{q}\big[J_{\alpha}(x,\lambda;q^{2})\big],$$ and by \eqref{recc} we can be write
$$\frac{(-x^{2}q^{2\alpha+2};q^{2})_{\infty}}{(-x^{2} q^{2};q^{2})_{\infty}x}D_{q^{-1}}\big[\frac{(-x^{2} q^{2};q^{2})_{\infty}}{(-x^{2}q^{2\alpha+4};q^{2})_{\infty}x}D_{q}[J_{\alpha}(x,\lambda;q^{2})]\big]=\frac{-\lambda^{2} q^{2\alpha+3}}{(1-q)^{2}}J_{\alpha}(x,\lambda;q^{2}). $$
\end{proof}
\begin{proposition} The big $q$-Bessel functions satisfy the recurrence relations
\begin{align*}
&i)J_{\alpha+1}(x,\lambda;q^{2})=\frac{(1-q^{2\alpha+2})}{\lambda^{2} q^{2\alpha}(q^{2\alpha+2}x^{2}+1)}\big[(1-q^{2\alpha}-\lambda^{2}q^{2\alpha} x^{2})J_{\alpha}(x,\lambda;q^{2})-(1-q^{2\alpha})J_{\alpha-1}(x,\lambda;q^{2})\big],\label{reccu} \\
&ii)J_{\alpha+1}(x q^{-1},\lambda;q^{2})=\frac{(1-q^{2\alpha+2})}{\lambda^{2} q^{2\alpha}(1+x^{2})}\big[(1-q^{2\alpha})J_{\alpha}(x,\lambda;q^{2})-J_{\alpha-1}(x,\lambda;q^{2})\big].
\end{align*}
\end{proposition}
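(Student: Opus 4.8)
Both identities are contiguous relations in the order $\alpha$, so I would argue directly from the power series \eqref{11}. Write
$$J_{\alpha}(x,\lambda;q^{2})=\sum_{k=0}^{\infty}c_{k}(\alpha)P_{k}(x),\qquad P_{k}(x)=(-\frac{1}{x^{2}};q^{2})_{k}(\lambda x)^{2k},\qquad c_{k}(\alpha)=\frac{(-1)^{k}q^{k(k-1)+2k(\alpha+1)}}{(q^{2},q^{2\alpha+2};q^{2})_{k}}.$$
Since $(-\frac{1}{x^{2}};q^{2})_{k}x^{2k}=\prod_{i=0}^{k-1}(x^{2}+q^{2i})$, the auxiliary functions $P_{k}$ obey the two elementary identities
$$x^{2}P_{k}(x)=\lambda^{-2}P_{k+1}(x)-q^{2k}P_{k}(x),\qquad (1+x^{2})P_{k}(q^{-1}x)=q^{-2k}\lambda^{-2}P_{k+1}(x),$$
the first from $x^{2}=(x^{2}+q^{2k})-q^{2k}$ and the second from $\prod_{i=1}^{k}(x^{2}+q^{2i})=(1+x^{2})^{-1}\prod_{i=0}^{k}(x^{2}+q^{2i})$.

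For $ii)$ I would substitute the three series on the right, use the first $P_{k}$-identity to rewrite every occurrence of $x^{2}J_{\beta}$, regroup the result as $\sum_{k}(\,\cdot\,)P_{k}(x)$, and expand the left-hand side via the second $P_{k}$-identity as $\sum_{k\ge 1}q^{-2(k-1)}\lambda^{-2}c_{k-1}(\alpha+1)P_{k}(x)$. Because the $P_{k}$ are linearly independent (as polynomials in $x^{2}$ they are triangular with respect to the monomials), matching the coefficient of each $P_{k}(x)$ turns $ii)$ into a single scalar relation among $c_{k-1}(\alpha+1),c_{k}(\alpha),c_{k}(\alpha-1)$; after inserting the closed forms and cancelling the common factor $(-1)^{k}q^{k(k-1)}/(q^{2};q^{2})_{k}$ this collapses, using only $(a;q^{2})_{k}=(1-a)(aq^{2};q^{2})_{k-1}=(1-aq^{2k-2})(a;q^{2})_{k-1}$, to an identity in the factors $(q^{2\alpha};q^{2})_{k}$, $(q^{2\alpha+2};q^{2})_{k}$, $(q^{2\alpha+4};q^{2})_{k-1}$. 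Relation $i)$ is handled exactly the same way, using the first $P_{k}$-identity once more to absorb the term $\lambda^{2}q^{2\alpha}x^{2}J_{\alpha}$.

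An alternative, probably cleaner route: first establish the one-step identity
$$(1+q^{2\alpha+2}x^{2})J_{\alpha+1}(x,\lambda;q^{2})=(1+x^{2})J_{\alpha+1}(q^{-1}x,\lambda;q^{2})-x^{2}(1-q^{2\alpha+2})J_{\alpha}(x,\lambda;q^{2})$$
straight from \eqref{recc}, by expanding $D_{q^{-1}}$ of the product with the product rule for $D_{q^{-1}}$ and simplifying the ratios $(-x^{2}q^{2};q^{2})_{\infty}/(-x^{2}q^{2\alpha+j};q^{2})_{\infty}$ that occur; this is precisely the identity that converts $ii)$ into $i)$ by eliminating $J_{\alpha+1}(q^{-1}x)$, so it then suffices to prove $ii)$ alone. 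The argument carries no conceptual difficulty; the only real work is the $q$-bookkeeping in the coefficient identity, where the powers $q^{k(k-1)}$, $q^{2k\alpha}$ and the several $q$-shifted factorials must be kept aligned so that everything cancels. A sensible safeguard is to check the scalar identity first at $k=0,1$, which already pins down the rational prefactors in $q^{\alpha}$, and then run an induction on $k$ using the peeling relations above.
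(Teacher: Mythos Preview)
Your coefficient-matching approach is valid and the two auxiliary identities for $P_k$ are correct, but this is not how the paper proceeds. The paper's proof is entirely structural: it uses the already established difference relations \eqref{rec} and \eqref{recc} and does no power-series bookkeeping at all. From \eqref{recc} (with a shift of index) one obtains the key identity
\[
(1+x^{2}q^{2\alpha+2})J_{\alpha}(xq,\lambda;q^{2})-(1+x^{2}q^{2})J_{\alpha}(x,\lambda;q^{2})=-x^{2}q^{2}(1-q^{2\alpha})J_{\alpha-1}(xq,\lambda;q^{2}),
\]
which is exactly the ``one-step'' relation you write down in your alternative route, after the substitution $\alpha\to\alpha-1$, $x\to xq$. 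The paper then combines this single identity with the rewriting of \eqref{rec} as
\[
J_{\alpha}(xq,\lambda;q^{2})=J_{\alpha}(x,\lambda;q^{2})+\frac{\lambda^{2}x^{2}q^{2\alpha+2}}{1-q^{2\alpha+2}}J_{\alpha+1}(x,\lambda;q^{2}),
\]
and eliminating $J_{\alpha}(xq,\lambda;q^{2})$ (respectively $J_{\alpha}(x,\lambda;q^{2})$) between the two gives $i)$ (respectively $ii)$) directly. So your alternative route is essentially the paper's argument, but you stop short: once you have the one-step identity, there is no need to fall back on coefficient matching for $ii)$---the relation coming from \eqref{rec} finishes both parts in two lines.

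What each approach buys: your direct expansion is self-contained and does not rely on Proposition~2.1, at the cost of a somewhat heavy scalar verification in $k$; the paper's route is shorter and more conceptual, since the recurrences are already consequences of \eqref{rec}--\eqref{recc} and no series need be touched. Your plan is fine as a fallback, but recognising that the second ingredient from \eqref{rec} closes the argument would save you all of the $q$-bookkeeping you anticipate.
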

\begin{proof}$i)$ By \eqref{recc} and \eqref{rec} we get
\begin{equation}
(1+x^{2} q^{2\alpha+2})J_{\alpha}(x q,\lambda;q^{2})-(1+x^{2} q^{2})J_{\alpha}(x,\lambda;q^{2})=-x^{2} q^{2} (1-q^{2\alpha})J_{\alpha-1}(xq,\lambda;q^{2}),\label{r}
\end{equation}
and
\begin{equation*}
J_{\alpha}(x q,\lambda;q^{2})=J_{\alpha}(x ,\lambda;q^{2})+\frac{\lambda^{2} x^{2} q^{2\alpha+2}}{1-q^{2\alpha+2}} J_{\alpha+1}(x,\lambda ;q^{2}).
\end{equation*}
Hence,\begin{align*}
J_{\alpha+1}(x,\lambda;q^{2})=\frac{(1-q^{2\alpha+2})}{\lambda^{2} q^{2\alpha}(q^{2\alpha+2}x^{2}+1)}\big[(1-q^{2\alpha}-\lambda^{2}q^{2\alpha} x^{2})J_{\alpha}(x,\lambda;q^{2})-(1-q^{2\alpha})J_{\alpha-1}(x,\lambda;q^{2})\big].
\end{align*}
$ii)$  Similarly, the equality \eqref{rec} gives us
\begin{equation}
J_{\alpha}(x,\lambda ;q^{2})=J_{\alpha}(x q,\lambda;q^{2})-\frac{\lambda^{2} x^{2} q^{2\alpha+2}}{1-q^{2\alpha+2}}J_{\alpha+1}(x,\lambda ;q^{2}),\label{reccuren}
\end{equation}
then by \eqref{r} and \eqref{reccuren}, we obtain
 \begin{align*}
J_{\alpha+1}(x q^{-1},\lambda;q^{2})=\frac{(1-q^{2\alpha+2})}{\lambda^{2} q^{2\alpha}(1+x^{2})}\big[(1-q^{2\alpha})J_{\alpha}(x,\lambda;q^{2})-J_{\alpha-1}(x,\lambda;q^{2})\big].
\end{align*}
\end{proof}
\section{On the zeros of the big \ $q$-Bessel functions}
By using a similar method as \cite{Koelink}, we prove in this
section that the big $q$-Bessel function has infinity of simple
zeros on the real line and by an explicit evaluation of a
$q$-integral, we established a new orthogonality relations for this
function.
 \begin{proposition}
Let $\alpha >-1$ and $ a>0. $ For every  $\lambda ,
\mu\in\mathbb{C}\setminus\{0\}$, we have
\begin{equation}
(\lambda^{2} - \mu^{2})\int_{0}^{a}\frac{x(-x^{2} q^{2};q^{2})_{\infty}}{(-x^{2}q^{2\alpha +4};q^{2})_{\infty}}J_{\alpha+1}(x,\lambda;q^{2})J_{\alpha+1}(x,\mu;q^{2})d_{q}x
\end{equation}
$$=\frac{(1-q)(1-q^{2\alpha+2})}{q^{2\alpha+2}}\frac{(-a^{2};q^{2})_{\infty}}{(-a^{2}q^{2\alpha+2};q^{2})_{\infty}}\big[J_{\alpha+1}(aq^{-1},\mu;q^{2})
J_{\alpha}(a,\lambda;q^{2})
-J_{\alpha+1}(aq^{-1},\lambda;q^{2})J_{\alpha}(a,\mu;q^{2})\big] \label{zero}.$$\label{orth}
\end{proposition}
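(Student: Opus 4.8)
The statement is a $q$-analogue of the classical Lagrange (Green) identity attached to the second order $q$-difference operator $L$: the idea is to rewrite the integrand as a total $q$-difference and then apply the $q$-fundamental theorem of calculus. I set $w(x)=\frac{(-x^{2}q^{2};q^{2})_{\infty}}{(-x^{2}q^{2\alpha+4};q^{2})_{\infty}}$ and $\widetilde w(x)=\frac{(-x^{2}q^{2};q^{2})_{\infty}}{(-x^{2}q^{2\alpha+2};q^{2})_{\infty}}$, so that the weight in the integral is exactly $x\,w(x)$, and I abbreviate $I(\lambda,\mu)=\int_{0}^{a}x\,w(x)J_{\alpha+1}(x,\lambda;q^{2})J_{\alpha+1}(x,\mu;q^{2})\,d_{q}x$ and $K(\lambda,\mu)=\int_{0}^{a}x\,\widetilde w(x)J_{\alpha}(x,\lambda;q^{2})J_{\alpha}(x,\mu;q^{2})\,d_{q}x$; note that both are symmetric in $\lambda\leftrightarrow\mu$. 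The three tools are \eqref{rec}, \eqref{recc} and the $q$-integration by parts formula \eqref{int}.

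First, \eqref{rec} gives $x\,J_{\alpha+1}(x,\lambda;q^{2})=-\frac{(1-q)(1-q^{2\alpha+2})}{\lambda^{2}q^{2\alpha+2}}\,D_{q}\big[J_{\alpha}(x,\lambda;q^{2})\big]$, so $I(\lambda,\mu)=-\frac{(1-q)(1-q^{2\alpha+2})}{\lambda^{2}q^{2\alpha+2}}\int_{0}^{a}w(x)\,D_{q}\big[J_{\alpha}(x,\lambda;q^{2})\big]\,J_{\alpha+1}(x,\mu;q^{2})\,d_{q}x$. I then apply \eqref{int} with $f(x)=J_{\alpha}(x,\lambda;q^{2})$ and $g(x)=w(x)J_{\alpha+1}(x,\mu;q^{2})$, which transfers the $q$-derivative onto $g$; by \eqref{recc}, $D_{q^{-1}}\big[w(x)J_{\alpha+1}(x,\mu;q^{2})\big]=-\frac{x(1-q^{2\alpha+2})}{1-q^{-1}}\widetilde w(x)J_{\alpha}(x,\mu;q^{2})$, so the remaining integral is, up to an explicit constant, $K(\lambda,\mu)$. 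Collecting the constants (the identity $q(1-q^{-1})=-(1-q)$ is what makes the factors fit) yields
$$\lambda^{2}q^{2\alpha+2}\,I(\lambda,\mu)=-(1-q)(1-q^{2\alpha+2})\,B_{\lambda}+(1-q^{2\alpha+2})^{2}\,K(\lambda,\mu),$$
where $B_{\lambda}=\big[\,J_{\alpha}(x,\lambda;q^{2})\,w(q^{-1}x)\,J_{\alpha+1}(q^{-1}x,\mu;q^{2})\,\big]_{0}^{a}$ is the boundary term produced by \eqref{int}.

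Now I write the same identity with $\lambda$ and $\mu$ interchanged and subtract; since $I$ and $K$ are symmetric, $K(\lambda,\mu)$ cancels and I am left with $(\lambda^{2}-\mu^{2})q^{2\alpha+2}I(\lambda,\mu)=-(1-q)(1-q^{2\alpha+2})\big(B_{\lambda}-B_{\mu}\big)$. It remains to evaluate the boundary terms. At the upper endpoint, $w(q^{-1}a)=\frac{(-a^{2};q^{2})_{\infty}}{(-a^{2}q^{2\alpha+2};q^{2})_{\infty}}$ — exactly the infinite product occurring in the statement — and $B_{\lambda}-B_{\mu}$ collapses to $w(q^{-1}a)\big[J_{\alpha+1}(aq^{-1},\mu;q^{2})J_{\alpha}(a,\lambda;q^{2})-J_{\alpha+1}(aq^{-1},\lambda;q^{2})J_{\alpha}(a,\mu;q^{2})\big]$ together with the contribution at $x=0$, which is checked to drop out; dividing by $q^{2\alpha+2}$ gives the asserted formula. (Equivalently, one can combine \eqref{rec} and \eqref{recc} to write $(\lambda^{2}-\mu^{2})\,x\,w(x)J_{\alpha+1}(x,\lambda;q^{2})J_{\alpha+1}(x,\mu;q^{2})$ directly as $D_{q^{-1}}$ of an explicit $q$-Wronskian and integrate it via $\int_{0}^{a}D_{q^{-1}}H\,d_{q}x=q\,[H(q^{-1}x)]_{0}^{a}$; the bookkeeping is the same.)

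The two applications of the recurrences and the final algebra are routine; the points that need care are the exact signs and powers of $q$ in the $q$-integration by parts, and verifying that the lower endpoint $x=0$ leaves no residual term — by analogy with the little $q$-Bessel case of Koelink--Swartouw this is the most delicate step, and is where an error would most easily slip in.
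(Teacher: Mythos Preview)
Your proof is correct and follows essentially the same route as the paper: apply the recurrences \eqref{rec} and \eqref{recc} together with the $q$-integration by parts \eqref{int} to obtain a relation of the form $\lambda^{2}q^{2\alpha+2}I(\lambda,\mu)=(1-q^{2\alpha+2})^{2}K(\lambda,\mu)+\text{boundary}$, then interchange $\lambda\leftrightarrow\mu$ and subtract so that the symmetric $K$-term cancels. The paper carries out the computation starting from $K(\lambda,\mu)$ rather than from $I(\lambda,\mu)$, but this is the same identity read in the opposite direction, and the boundary evaluation (including the vanishing at $x=0$) is handled identically.
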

\begin{proof} Using the $q$-integration by parts formula \eqref{int} and
relations  \eqref{rec} and \eqref{recc}, we get
\begin{eqnarray*}
&&\int_{0}^{a}\frac{x(-x^{2} q^{2};q^{2})_{\infty}}{(-x^{2} q^{2\alpha +2};q^{2})_{\infty}}J_{ \alpha}(x,\lambda;q^{2})J_{\alpha}(x,\mu;q^{2})d_{q}x\\
&=& \frac{1-q}{1-q^{2\alpha+2}}\big[ \frac{(-x^{2};q^{2})_{\infty}}{(-x^{2}q^{2\alpha+2};q^{2})_{\infty}}J_{\alpha+1}(xq^{-1},\lambda;q^{2})J_{\alpha}(x,\mu;q^{2})\big]_{0}^{a}
\end{eqnarray*}
$$+\frac{\lambda^{2} q^{2\alpha +2}}{(1-q^{2\alpha+2})^{2}}\int_{0}^{a}\frac{x(-x^{2} q^{2};q^{2})_{\infty}}
{(-x^{2} q^{2\alpha+4};q^{2})_{\infty}}J_{\alpha+1}(x,\lambda;q^{2})J_{\alpha+1}(x,\mu;q^{2})d_{q}x.$$
Then the interchanging of $ \lambda $ and  $ \mu $ in the last
equation, yields a set of two equations, which can be solved easily.
\end{proof}
\begin{corollary}Let
 $ \alpha >-1$ and $a>0$. The zeros of the function $J_{\alpha}(a,\lambda;q^{2})$  are real.
\end{corollary}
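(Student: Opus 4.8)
The plan is to combine the bilinear identity of Proposition~\ref{orth} with complex conjugation. First I would record two elementary observations about the series \eqref{11}. Because $q\in(0,1)$, $a>0$ and $\alpha>-1$, each coefficient $\frac{(-1)^{k}q^{2\binom{k}{2}+2k(\alpha+1)}}{(q^{2},q^{2\alpha+2};q^{2})_{k}}(-1/a^{2};q^{2})_{k}\,a^{2k}$ of the entire function $\lambda\mapsto J_{\alpha}(a,\lambda;q^{2})$ is real, hence $\overline{J_{\alpha}(a,\lambda;q^{2})}=J_{\alpha}(a,\overline{\lambda};q^{2})$, and likewise $\overline{J_{\alpha+1}(x,\lambda;q^{2})}=J_{\alpha+1}(x,\overline{\lambda};q^{2})$ for real $x$. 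Also, $0$ is not a zero: at $\lambda=0$ only the $k=0$ term of \eqref{11} survives, so $J_{\alpha}(a,0;q^{2})=1$.

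Now suppose $\lambda_{0}\neq 0$ and $J_{\alpha}(a,\lambda_{0};q^{2})=0$. The first observation gives $J_{\alpha}(a,\overline{\lambda_{0}};q^{2})=0$ as well, so the bracket on the right-hand side of the identity in Proposition~\ref{orth} vanishes for the pair $(\lambda,\mu)=(\lambda_{0},\overline{\lambda_{0}})$, leaving
$$
(\lambda_{0}^{2}-\overline{\lambda_{0}}^{\,2})\int_{0}^{a}\frac{x(-x^{2}q^{2};q^{2})_{\infty}}{(-x^{2}q^{2\alpha+4};q^{2})_{\infty}}\,|J_{\alpha+1}(x,\lambda_{0};q^{2})|^{2}\,d_{q}x=0 .
$$
Since $\int_{0}^{a}f\,d_{q}x=(1-q)a\sum_{n\geq 0}q^{n}f(aq^{n})$ and the weight is strictly positive at each node $x=aq^{n}>0$, this $q$-integral is a sum of non-negative terms; if I can show it is strictly positive, then $\lambda_{0}^{2}=\overline{\lambda_{0}}^{\,2}$, i.e.\ $\lambda_{0}$ is real or purely imaginary. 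To exclude the purely imaginary case I would argue directly from \eqref{11}: for $\lambda_{0}=it$ with $t\in\mathbb{R}$ the factor $(\lambda_{0}a)^{2k}=(-1)^{k}(ta)^{2k}$ cancels the alternating sign, so
$$
J_{\alpha}(a,it;q^{2})=\sum_{k=0}^{\infty}\frac{q^{2\binom{k}{2}+2k(\alpha+1)}}{(q^{2},q^{2\alpha+2};q^{2})_{k}}\,(-1/a^{2};q^{2})_{k}\,(ta)^{2k}\geq 1>0 ,
$$
every term being positive; hence $it$ is not a zero and $\lambda_{0}\in\mathbb{R}$.

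The one point that is not mere substitution is the strict positivity of the first $q$-integral. It can vanish only if $J_{\alpha+1}(aq^{n},\lambda_{0};q^{2})=0$ for all $n\geq 0$; since $aq^{n}\to 0$, this would force the entire function $x\mapsto J_{\alpha+1}(x,\lambda_{0};q^{2})$ (entire because $\alpha+1>-1$) to vanish identically. I would rule that out by evaluating at $x=i$: there $-1/x^{2}=1$, and $(1;q^{2})_{k}=0$ for $k\geq 1$, so \eqref{11} collapses to its $k=0$ term and $J_{\alpha+1}(i,\lambda_{0};q^{2})=1\neq 0$. (Equivalently, one can use \eqref{rec} to transfer the vanishing of $J_{\alpha+1}(aq^{n},\lambda_{0};q^{2})$ to that of $J_{\alpha}(aq^{n},\lambda_{0};q^{2})$ and then invoke the same $x=i$ evaluation for $J_{\alpha}$.) Apart from this, the argument is a direct reading of Proposition~\ref{orth}.
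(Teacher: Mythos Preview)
Your argument is correct and follows essentially the same route as the paper: conjugate the assumed zero, feed the pair into Proposition~\ref{orth}, use positivity of the weight in the $q$-integral to force $\lambda_{0}^{2}=\overline{\lambda_{0}}^{\,2}$, and then rule out the purely imaginary case by positivity of the series. Your treatment is in fact slightly tighter than the paper's, since you explicitly justify why $J_{\alpha+1}(\cdot,\lambda_{0};q^{2})\equiv 0$ is impossible (via the evaluation at $x=i$), a point the paper leaves implicit.
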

\begin{proof} Suppose $\lambda \neq 0$ is a zero of  $\lambda
\rightarrow J_{\alpha}(a,\lambda;q^{2}).$ We have
\begin{center}
$J_{\alpha}(a,\overline{\lambda};q^{2})=\overline{J_{\alpha}(a,\lambda;q^{2})}=0.$
\end{center}
The formula \eqref{zero} with  $\mu=\overline{\lambda}$ yields
$$(\lambda^{2}-\overline{\lambda}^{2})\int_{0}^{a}\frac{x(-x^{2} q^{2};q^{2})_{\infty}}{(-x^{2} q^{2\alpha +4};q^{2})_{\infty}}|J_{\alpha+1}(x,\lambda;q^{2})|^{2}d_{q}x=0.$$
Now $\lambda^{2}=\overline{\lambda}^{2}$ if and only if $\lambda\in \mathbb{R}$ or \ $\lambda \in i\mathbb{R}$, then in all other cases we have
\begin{equation}
\int_{0}^{a}\frac{x(-x^{2} q^{2};q^{2})_{\infty}}{(-x^{2} q^{2\alpha +4};q^{2})_{\infty}}|J_{\alpha+1}(x,\lambda;q^{2})|^{2}d_{q}x=0.
\end{equation}
Using the definition of the $q$-integral we get
$$\int_{0}^{a}\frac{x(-x^{2} q^{2};q^{2})_{\infty}}{(-x^{2} q^{2\alpha +4};q^{2})_{\infty}}|J_{\alpha+1}(x,\lambda;q^{2}|^{2}d_{q}x$$
$$=(1-q)a^{2}\sum\limits_{k=0}^{\infty}\frac{q^{2k}(-a^{2}q^{2k+2};q^{2})_{\infty}}{(-a^{2}q^{2k+2\alpha+4};q^{2})_{\infty}}|J_{\alpha+1}(aq^{k},\lambda;q^{2})|^{2},$$
then
\begin{center}
$ J_{\alpha+1}(aq^{k},\lambda;q^{2})=0, \,\,k\in \mathbb{N}$,
\end{center}
and $J_{\alpha+1}(.,\lambda;q^{2})$ defines an analytic functions on $\mathbb{C}$. Hence, $ J_{\alpha+1}(.,\lambda;q^{2})=0.$
Now if $\lambda=i\mu$, with $\mu\in \mathbb{R}$ we have
$$J_{\alpha}(a,i\mu;q^2)=\sum\limits_{k=0}^{\infty}\frac{q^{2(^{k}_{2})+2k(\alpha+1)}}{(q^{2},q^{2\alpha+2};q^{2})_{k}}(-\frac{1}{a^{2}};q^{2})_{k}(a
\mu )^{2k}$$
for $\alpha>-1$ this expression cannot be zero, which proves the corollary.
\end{proof}
To obtain an expression for the $q$-integral in formula \eqref{zero}
with $ \lambda=\mu, $ we use l'Hopital's rule. The result is
\begin{align*}
&\int_{0}^{a}\frac{x(-x ^{2}q^{2};q^{2})_{\infty}}{(-x^{2} q^{2\alpha +4};q^{2})_{\infty}}\big(J_{\alpha+1}(x,\lambda;q^{2})\big)^{2}d_{q}x=\frac{(q-1)(1-q^{2\alpha+2})(-a^{2};q^{2})_{\infty}}{2\lambda q^{2\alpha+2}(-a^{2}q^{2\alpha+2};q^{2})_{\infty}}\\
&\Big[J_{\alpha}(a,\lambda;q^{2})\big[\frac{\partial J_{\alpha+1}}{\partial \mu}(aq^{-1},\mu;q^{2})\big]_{\mu=\lambda}-J_{\alpha+1}(aq^{-1},\lambda;q^{2})\big[\frac{\partial J_{\alpha}}{\partial \mu}(a,\mu;q^{2})\big]_{\mu=\lambda}\Big].
\end{align*}
This formula simplifies to
\begin{equation}
\int_{0}^{a}\frac{x(-x^{2} q^{2};q^{2})_{\infty}}{(-x^{2} q^{2\alpha +4};q^{2})_{\infty}}(J_{\alpha+1}(x,\lambda;q^{2}))^{2}d_{q}x\label{orthogonal}
\end{equation}
$$=\frac{(1-q)(1-q^{2\alpha+2})}{2\lambda q^{2\alpha+2}}\frac{(-a^{2};q^{2})_{\infty}}{(-a^{2}q^{2\alpha+2};q^{2})_{\infty}}J_{\alpha+1}(aq^{-1},\lambda;q^{2})\big[\frac{\partial J_{\alpha}}{\partial\mu}(a,\mu;q^{2})\big]_{\mu=\lambda},$$
for $\lambda\neq0$ a real zero of $J_{\alpha}(a,\lambda;q^{2}).$
\begin{lemma}
The non-zero real zeros of $\lambda\rightarrow J_{\alpha}(a,\lambda;q^{2}),$ with $\alpha>-1$ are simple zeros.
\end{lemma}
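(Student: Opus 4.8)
The plan is to argue by contradiction using the orthogonality-type identity \eqref{orthogonal}. Suppose that $\lambda\neq 0$ is a real zero of $\mu\mapsto J_{\alpha}(a,\mu;q^{2})$ which is not simple, so that in addition $\big[\tfrac{\partial J_{\alpha}}{\partial\mu}(a,\mu;q^{2})\big]_{\mu=\lambda}=0$. Plugging this into the right-hand side of \eqref{orthogonal} immediately forces
$$
\int_{0}^{a}\frac{x(-x^{2}q^{2};q^{2})_{\infty}}{(-x^{2}q^{2\alpha+4};q^{2})_{\infty}}\big(J_{\alpha+1}(x,\lambda;q^{2})\big)^{2}d_{q}x=0.
$$
Now I would expand the $q$-integral by its definition exactly as in the proof of the preceding Corollary, obtaining
$$
(1-q)a^{2}\sum_{k=0}^{\infty}\frac{q^{2k}(-a^{2}q^{2k+2};q^{2})_{\infty}}{(-a^{2}q^{2k+2\alpha+4};q^{2})_{\infty}}\big(J_{\alpha+1}(aq^{k},\lambda;q^{2})\big)^{2}=0.
$$
Since $\alpha>-1$ and $0<q<1$, every coefficient $\dfrac{q^{2k}(-a^{2}q^{2k+2};q^{2})_{\infty}}{(-a^{2}q^{2k+2\alpha+4};q^{2})_{\infty}}$ is strictly positive, and $J_{\alpha+1}(aq^{k},\lambda;q^{2})$ is real (the series for $J_{\alpha+1}$ has real coefficients and $\lambda,a,q$ are real). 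Hence each term $\big(J_{\alpha+1}(aq^{k},\lambda;q^{2})\big)^{2}$ vanishes, so $J_{\alpha+1}(aq^{k},\lambda;q^{2})=0$ for all $k\in\mathbb{N}$. Because $x\mapsto J_{\alpha+1}(x,\lambda;q^{2})$ is analytic on $\mathbb{C}$ and vanishes on the sequence $aq^{k}\to 0$, it must be identically zero: $J_{\alpha+1}(\cdot,\lambda;q^{2})\equiv 0$.

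It remains to derive a contradiction from $J_{\alpha+1}(\cdot,\lambda;q^{2})\equiv 0$ together with $\lambda\neq 0$. This is where I expect the only real content of the argument to lie. One clean way: use relation \eqref{rec}, which gives $D_{q}\big[J_{\alpha}(x,\lambda;q^{2})\big]=-\dfrac{\lambda^{2}q^{2\alpha+2}x}{(1-q)(1-q^{2\alpha+2})}J_{\alpha+1}(x,\lambda;q^{2})$; if the right-hand side is identically zero and $\lambda\neq 0$, then $D_{q}\big[J_{\alpha}(x,\lambda;q^{2})\big]\equiv 0$, so $J_{\alpha}(x,\lambda;q^{2})$ is constant in $x$. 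Evaluating the defining series at $x=0$ shows $J_{\alpha}(0,\lambda;q^{2})=1$ (only the $k=0$ term survives since $(-1/x^{2};q^{2})_{k}x^{2k}\to 0$ for $k\geq 1$, or more directly from the $_1\phi_1$ form), hence $J_{\alpha}(x,\lambda;q^{2})\equiv 1$. But then its $q$-derivative being zero is consistent — so instead I would look at the value of $J_{\alpha+1}(0,\lambda;q^{2})$: from the series, $J_{\alpha+1}(0,\lambda;q^{2})=1\neq 0$, directly contradicting $J_{\alpha+1}(\cdot,\lambda;q^{2})\equiv 0$. (Care is needed handling the factor $(-1/x^{2};q^{2})_{k}(\lambda x)^{2k}$ as $x\to 0$; since $(-1/x^{2};q^{2})_{k}=\prod_{i=0}^{k-1}(1+q^{2i}/x^{2})\sim x^{-2k}\prod q^{2i}$, the product with $x^{2k}$ tends to a finite nonzero limit, and the $q$-Bessel function is analytic at $0$ with value $1$, as already asserted in the excerpt.) This yields the contradiction and completes the proof.

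The main obstacle is precisely the last step: one must know that $J_{\alpha+1}(\cdot,\lambda;q^{2})$ is genuinely nonzero somewhere, i.e. that the normalization at $x=0$ is $1$; this follows from the remark in Section 2 that for $\alpha>-1$ the functions $J_{\alpha}(x,\lambda;q^{2})$ are analytic on $\mathbb{C}$, together with the explicit $_1\phi_1$ series whose constant term is $1$. Everything else is a direct transcription of the positivity/analyticity argument already used for the Corollary on real zeros. I would therefore structure the write-up as: (i) assume non-simplicity, (ii) apply \eqref{orthogonal} to kill the $q$-integral, (iii) expand and use positivity of the weights plus reality to get $J_{\alpha+1}(aq^{k},\lambda;q^{2})=0$ for all $k$, (iv) invoke analyticity to get $J_{\alpha+1}(\cdot,\lambda;q^{2})\equiv 0$, (v) evaluate at $x=0$ to contradict $J_{\alpha+1}(0,\lambda;q^{2})=1$.
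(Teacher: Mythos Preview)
Your approach is the same as the paper's: both argue that the left side of \eqref{orthogonal} is strictly positive (via the positivity/analyticity argument from Corollary~3.2), which forces the partial derivative on the right to be nonzero. The paper states this contrapositively; you phrase it as a contradiction, which is logically equivalent.

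However, your step (v) contains an error. The claim $J_{\alpha+1}(0,\lambda;q^{2})=1$ is false: as you yourself observe, $(-1/x^{2};q^{2})_{k}\,x^{2k}=\prod_{i=0}^{k-1}(x^{2}+q^{2i})\to q^{k(k-1)}\neq 0$ as $x\to 0$, so \emph{every} term of the series survives at $x=0$, and the value there is
\[
J_{\alpha+1}(0,\lambda;q^{2})=\sum_{k\geq 0}\frac{(-1)^{k}q^{4\binom{k}{2}+2k(\alpha+2)}}{(q^{2},q^{2\alpha+4};q^{2})_{k}}\,\lambda^{2k},
\]
which depends on $\lambda$ and is certainly not identically $1$. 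The easy fix is to evaluate at $x=i$ instead: since $(-1/x^{2};q^{2})_{k}\big|_{x=i}=(1;q^{2})_{k}=0$ for $k\geq 1$, only the $k=0$ term survives and $J_{\alpha+1}(i,\lambda;q^{2})=1$ for every $\lambda$, which contradicts $J_{\alpha+1}(\cdot,\lambda;q^{2})\equiv 0$ (analyticity in $x$ lets you pass from vanishing on the real sequence $aq^{k}$ to vanishing on all of $\mathbb{C}$, including $x=i$). The paper itself omits this verification entirely, simply treating ``$J_{\alpha+1}(\cdot,\lambda;q^{2})\equiv 0$'' as manifestly absurd.
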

\begin{proof} Let $\lambda$ be a non-zero real zero of
$J_{\alpha}(a,\lambda;q^{2}), $ with $
\alpha>-1$. \ The integral
$$\int_{0}^{a}\frac{x(-x^{2} q^{2};q^{2})_{\infty}}{(-x^{2} q^{2\alpha +4};q^{2})_{\infty}}|J_{\alpha+1}(x,\lambda;q^{2})|^{2}d_{q}x=\int_{0}^{a}\frac{x(-x^{2} q^{2};q^{2})_{\infty}}{(-x^{2} q^{2\alpha +4};q^{2})_{\infty}}(J_{\alpha+1}(x,\lambda;q^{2}))^{2}d_{q}x$$
is strictly positive. If it were zero, this would imply that the big $q$-Bessel function
is identically zero as in the proof of Corollary 3.2 . Hence, \eqref{orthogonal} implies that $$\big[\frac{\partial J_{\alpha}}{\partial\mu}(a,\mu;q^{2})\big]_{\mu=\lambda}\neq0,$$ which proves the lemma.
\end{proof}
Recall that the order $\rho(f)$ of an entire function $f(z),$  see \cite{a,aaa}, is given by
$$\rho(f)=\limsup_{r\rightarrow \infty}\frac{\ln\ln M(r; \ f)}{\ln r}, \ \ \ \ \ \ \ M(r; \ f)=\max_{|z|\leq r}|f(z)|.$$
\begin{lemma}
For $\alpha>-\frac{1}{2}$ and $a\in\mathbb{R}$, the big $q$-Bessel function $\lambda\rightarrow J_{\alpha}(a,\lambda;q^{2})$ has infinitely many zeros.
\end{lemma}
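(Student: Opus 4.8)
The plan is to show that $\lambda\mapsto J_{\alpha}(a,\lambda;q^{2})$ is a transcendental entire function of order $0$, and then to invoke the standard consequence of Hadamard's factorization theorem that such a function has infinitely many zeros. From its power series the function is even in $\lambda$, so setting $w=\lambda^{2}$ it equals $\Phi_{a}(w)=\sum_{k\ge 0}c_{k}\,w^{k}$, an entire function of $w$, where (using $(-1/a^{2};q^{2})_{k}(\lambda a)^{2k}=\big(\prod_{i=0}^{k-1}(a^{2}+q^{2i})\big)\lambda^{2k}$, so that the formula makes sense for all $a\in\mathbb{R}$)
$$c_{k}=\frac{(-1)^{k}q^{2\binom{k}{2}+2k(\alpha+1)}}{(q^{2},q^{2\alpha+2};q^{2})_{k}}\prod_{i=0}^{k-1}(a^{2}+q^{2i}).$$
The first step is thus to estimate $c_{k}$, and the second to read the order off from it.

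For the estimate I would use that, since $0<q<1$ and $\alpha>-1/2$ (in particular $\alpha>-1$), the products $(q^{2};q^{2})_{k}$ and $(q^{2\alpha+2};q^{2})_{k}$ are bounded below by the fixed positive constants $(q^{2};q^{2})_{\infty}$ and $(q^{2\alpha+2};q^{2})_{\infty}$, while $\prod_{i=0}^{k-1}(a^{2}+q^{2i})\le (a^{2}+1)^{k}$. Together with $2\binom{k}{2}=k^{2}-k$ this yields constants $C_{1},D>0$ with
$$|c_{k}|\le C_{1}\,q^{k^{2}}D^{k}\qquad(k\ge 0),$$
so that $\ln(1/|c_{k}|)\ge k^{2}\ln(1/q)-k\ln D-\ln C_{1}$. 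Feeding this into the coefficient formula $\rho(\Phi_{a})=\limsup_{k\to\infty} k\ln k/\ln(1/|c_{k}|)$ for the order, or equivalently into the bound $M(r;\Phi_{a})\le C_{1}\sum_{k\ge 0}q^{k^{2}}(Dr)^{k}\le\exp(C_{2}(\ln r)^{2})$ for $r$ large, used with the definition of $\rho$ in terms of $M(r;\cdot)$ recalled just above, gives $\rho(\Phi_{a})=0$, and hence $\rho\big(J_{\alpha}(a,\cdot\,;q^{2})\big)=0$.

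The final step is Hadamard's theorem. The function $\lambda\mapsto J_{\alpha}(a,\lambda;q^{2})$ is not a polynomial: for $\alpha>-1$ and every $a\in\mathbb{R}$ each $c_{k}$ is a product of nonzero factors, so $c_{k}\neq 0$ for all $k$. If it had only finitely many zeros, then, being of order $0$, it would by Hadamard's factorization theorem equal $P(\lambda)\,e^{c}$ for a polynomial $P$ and a constant $c$ --- that is, a polynomial --- which is impossible. Hence $\lambda\mapsto J_{\alpha}(a,\lambda;q^{2})$ has infinitely many zeros (all of them real and simple when $a>0$, by Corollary 3.2 and Lemma 3.3). The only genuine computation here is the coefficient bound of the second step, and I expect the main --- though routine --- obstacle to be bookkeeping the $q$-shifted factorials uniformly in $k$, in particular handling the degenerate value $a=0$ so that the stated hypothesis $a\in\mathbb{R}$ is covered and not merely $a\neq 0$.
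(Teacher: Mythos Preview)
Your proof is correct and follows the same overall strategy as the paper: show that $\lambda\mapsto J_{\alpha}(a,\lambda;q^{2})$ is a nonpolynomial entire function of order $0$, and then conclude it has infinitely many zeros. The paper packages this last implication as Theorem~1.2.5 in Ismail's book rather than invoking Hadamard directly, and it does not spell out the ``not a polynomial'' check that you make explicit.

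The only substantive difference is in how order $0$ is established. You bound the coefficients by $|c_{k}|\le C_{1}q^{k^{2}}D^{k}$ and feed this into the textbook formula $\rho=\limsup_{k}\dfrac{k\ln k}{\ln(1/|c_{k}|)}$. The paper instead bounds the maximum modulus: from $|a_{n}|\le C$ (using $\alpha>-\tfrac12$ so that $a_{n}\to 0$) it obtains $M(r)\le C\sum_{n\ge 0}q^{n^{2}}(ar)^{2n}$, majorizes by the bilateral theta series, applies Jacobi's triple product to get $M(r)\le C\,(q^{2},-(ar)^{2}q,-q/(ar)^{2};q^{2})_{\infty}$, and then verifies $\limsup\dfrac{\ln\ln M(r)}{\ln r}=0$ via the substitution $r=q^{-(N+\varepsilon)}/a$. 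Your route is shorter and avoids the theta-function detour; it also handles $a=0$ transparently through the rewriting $(-1/a^{2};q^{2})_{k}\,a^{2k}=\prod_{i=0}^{k-1}(a^{2}+q^{2i})$, whereas the paper's substitution tacitly assumes $a\neq 0$. The paper's approach, on the other hand, produces an explicit closed-form majorant for $M(r)$, which can be of independent use.
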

\begin{proof}
We have $$J_{\alpha}(a,\lambda;q^{2})=\sum\limits_{n=0}^{\infty}a_{n}q^{n^{2}}(\lambda a)^{2n},$$
 with
$$a_{n}=\frac{(-1)^{n}q^{(2\alpha+1)n}}{(q^{2},q^{2\alpha+2};q^{2})_{n}}(-\frac{1}{a^{2}};q^{2})_{n}.$$
 By (Theorem 1.2.5,  \cite{aaa}) it suffices to show that \ $\rho(J_{\alpha}(a,\lambda;q^{2}))=0$.\\Since $\alpha>-1/2$, we have
$$\lim\limits _{n\rightarrow\infty}a_{n}=0,$$
then there exist \ $C>0$ \ such that $ |a_{n}|<C, $ and for $
|\lambda|<r,$ we have
\begin{eqnarray*}
M(r,\lambda\rightarrow J_{\alpha}(a,\lambda;q^{2}))
&\leq & C\sum\limits_{n=0}^{\infty}q^{n^{2}}(ar)^{2n},\\
&<&C\sum\limits_{n=-\infty}^{+\infty}q^{n^{2}}(ar)^{2n}.
\end{eqnarray*}
The Jacobi 's triple identity leads
$$
M(r,\lambda\rightarrow J_{\alpha}(a,\lambda;q^{2}))
<C(q^{2},-(ar)^{2}q,-q/(ar)^{2};q^{2})_{\infty}.
$$
Set $r=\frac{q^{-(N+\varepsilon)}}{a},$ for $-\frac{1}{2}\leq\varepsilon<\frac{1}{2}$ and $ N=0,1,2,...$ . Clearly
\begin{eqnarray*}
(-(ar)^{2}q;q^{2})_{\infty}&=&(-q^{-2(N+\varepsilon)+1};q^{2})_{\infty},\\&=&(-q^{1-2\varepsilon}q^{-2N};q^{2})_{\infty}.
\end{eqnarray*}
We have
$$(-(ar)^{2}q;q^{2})_{\infty}=(-q^{1-2\varepsilon};q^{2})_{\infty}(-q^{1-2\varepsilon-2N};q^{2})_{N}$$
and
\begin{eqnarray*}
(-(ar)^{2}q;q^{2})_{\infty}&=&q^{-(N^{2}+2N\varepsilon)}(-q^{2\varepsilon+1};q^{2})_{N}(-q^{1-2\varepsilon};q^{2})_{\infty},\\
&\leq&q^{-(N^{2}+2N\varepsilon)}(-q^{2\varepsilon+1};q^{2})_{\infty}(-q^{1-2\varepsilon};q^{2})_{\infty},
\end{eqnarray*}
also
\begin{eqnarray*}
(-q/(ar)^{2};q^{2})_{\infty}&=&(-q^{2(N+\varepsilon)+1};q^{2})_{\infty},\\
&=&(-1;q^{2})_{\infty}.
\end{eqnarray*}
Hence
\begin{eqnarray*}
&&\frac{\ln\ln M(r,\lambda\rightarrow J_{\alpha}(a,\lambda;q^{2}))}{\ln q^{-(N+\varepsilon)}},\\
&\leq&\frac{\ln\ln Aq^{-N(N+2\varepsilon)}}{\ln q^{-(N+\varepsilon)}},\\
&=&\frac{\ln N(N+2\varepsilon)+\ln\big(\frac{\ln A}{N(N+2\varepsilon)}-\ln q\big)}{-(N+\varepsilon)\ln q},
\end{eqnarray*}
with $$A=C(-q^{1-2\varepsilon},-q^{1+2\varepsilon},-1;q^{2})_{\infty}.$$
Which implies
\begin{center}
$\rho(\lambda\rightarrow J_{\alpha}(a,\lambda;q^{2}))=0$.
\end{center}
\end{proof}
Let $\alpha>-\frac{1}{2}$ we order the positive zeros of $\lambda\rightarrow J_{\alpha}(a,\lambda;q^{2})$ as
\begin{center}
$0<j_{1}^{\alpha}<j_{2}^{\alpha}<j_{3}^{\alpha}<... \ .$
\end{center}
\section{Orthogonality relation and completeness}
The Proposition 3.1 and relation \eqref{orthogonal} with $a=1$
are useful to state the orthogonality relations for the big
$q$-Bessel functions.
\begin{proposition}
Let $\alpha >-\frac{1}{2}$  and
$0<j_{1}^{\alpha}<j_{2}^{\alpha}<j_{3}^{\alpha}<...$ the
positive zeros of the big $q$-Bessel function $J_{\alpha}(1,\lambda;q^{2})$ then
\begin{eqnarray}
\int_{0}^{1}\frac{x(-x^{2} q^{2};q^{2})_{\infty}}{(-x^{2} q^{2\alpha +4};q^{2})_{\infty}}J_{\alpha+1}(x,j_{n}^{\alpha};q^{2})J_{\alpha+1}(x,j_{m}^{\alpha};q^{2})d_{q}x
\end{eqnarray}
\begin{eqnarray*}
&& \\
&=&\frac{(1-q)(1-q^{2\alpha+2})}{2\lambda q^{2\alpha+2}}\frac{(-1;q^{2})_{\infty}}{(-q^{2\alpha+2};q^{2})_{\infty}}J_{\alpha+1}(q^{-1},j_{n}^{\alpha};q^{2})\big[\frac{\partial J_{\alpha}}{\partial\mu}(1,\mu;q^{2})\big]_{\mu=j_{n}^{\alpha}}\delta_{n,m}.
\end{eqnarray*}
\end{proposition}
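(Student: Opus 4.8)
The plan is to deduce the statement directly from Proposition~\ref{orth} (which handles the case $\lambda\neq\mu$) together with the single‑node evaluation~\eqref{orthogonal} (which handles the case $\lambda=\mu$), specialising $a=1$ throughout.

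\textbf{Case $n\neq m$.} In Proposition~\ref{orth} put $a=1$, $\lambda=j_{n}^{\alpha}$, $\mu=j_{m}^{\alpha}$. By definition $j_{n}^{\alpha}$ and $j_{m}^{\alpha}$ are zeros of the entire function $\lambda\mapsto J_{\alpha}(1,\lambda;q^{2})$, so the bracket on the right of~\eqref{zero},
$$J_{\alpha+1}(q^{-1},j_{m}^{\alpha};q^{2})J_{\alpha}(1,j_{n}^{\alpha};q^{2})-J_{\alpha+1}(q^{-1},j_{n}^{\alpha};q^{2})J_{\alpha}(1,j_{m}^{\alpha};q^{2}),$$
vanishes, whence $\big((j_{n}^{\alpha})^{2}-(j_{m}^{\alpha})^{2}\big)$ times the integral equals $0$. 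Since the positive zeros were ordered as $0<j_{1}^{\alpha}<j_{2}^{\alpha}<\cdots$, for $n\neq m$ we have $j_{n}^{\alpha}\neq j_{m}^{\alpha}$, and as both are positive reals this forces $(j_{n}^{\alpha})^{2}\neq(j_{m}^{\alpha})^{2}$; dividing by this nonzero factor shows the integral is $0$.

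\textbf{Case $n=m$.} Here I would invoke~\eqref{orthogonal} with $a=1$ and $\lambda=j_{n}^{\alpha}$, which is a nonzero real zero of $J_{\alpha}(1,\cdot;q^{2})$, so the hypothesis of that identity is met. It reads precisely
$$\int_{0}^{1}\frac{x(-x^{2}q^{2};q^{2})_{\infty}}{(-x^{2}q^{2\alpha+4};q^{2})_{\infty}}\big(J_{\alpha+1}(x,j_{n}^{\alpha};q^{2})\big)^{2}d_{q}x=\frac{(1-q)(1-q^{2\alpha+2})}{2j_{n}^{\alpha}q^{2\alpha+2}}\frac{(-1;q^{2})_{\infty}}{(-q^{2\alpha+2};q^{2})_{\infty}}J_{\alpha+1}(q^{-1},j_{n}^{\alpha};q^{2})\Big[\frac{\partial J_{\alpha}}{\partial\mu}(1,\mu;q^{2})\Big]_{\mu=j_{n}^{\alpha}},$$
which is exactly the asserted right‑hand side (with the convention that the $\lambda$ appearing in the denominator of the stated formula is the node $j_{n}^{\alpha}$). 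Combining the two cases gives the $\delta_{n,m}$ form and completes the proof.

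There is no serious obstacle: the analytic content is already carried by Proposition~\ref{orth} and~\eqref{orthogonal}. Only two minor points deserve a word. First, the $q$-integral converges and the weight $x(-x^{2}q^{2};q^{2})_{\infty}/(-x^{2}q^{2\alpha+4};q^{2})_{\infty}$ is finite and positive on $(0,1]$, because $0<q<1$ makes each factor $1+x^{2}q^{2j}$ lie strictly between $1$ and $2$ there; second, distinct positive reals have distinct squares, so the factor $\lambda^{2}-\mu^{2}$ in~\eqref{zero} may legitimately be cancelled when $n\neq m$.
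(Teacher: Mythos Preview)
Your proof is correct and follows exactly the route the paper indicates: it specialises Proposition~\ref{orth} (for $n\neq m$) and identity~\eqref{orthogonal} (for $n=m$) to $a=1$, which is precisely how the paper derives the proposition. Your observation that the $\lambda$ in the denominator of the stated formula should be read as $j_{n}^{\alpha}$ is also correct.
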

We consider the inner product giving by
$$<f,g>=\int_{0}^{1}\frac{t(-t^{2}q^{2};q^{2})_{\infty}}{(-t^{2}q^{2\alpha+4};q^{2})_{\infty}}f(t)\overline{g(t)}d_{q}t.$$
Let $$G(\lambda)=\int_{0}^{1}\frac{x(-x^{2}q^{2};q^{2})_{\infty}}{(-x^{2}q^{2\alpha+4};q^{2})_{\infty}}g(x)J_{\alpha+1}(x,\lambda;q^{2})d_{q}x,$$
and $$h(\lambda)=\frac{G(\lambda)}{J_{\alpha}(1,\lambda;q^{2})}.$$
\begin{lemma}
If $ \alpha>-\frac{3}{2} $ and $g(x)\in
L_{q}^{2}((0,1);\frac{x(-x^{2}q^{2};q^{2})_{\infty}}{(-x^{2}q^{2\alpha+4};q^{2})_{\infty}}d_qx)$ then \ $h(\lambda)$ is entire of order $0$.
\end{lemma}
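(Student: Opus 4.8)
The plan is to control the numerator $G$ and the denominator $J_\alpha(1,\cdot;q^2)$ of $h$ separately, to deduce that $h$ is entire, and then to bound its growth. First, $G$ is entire of order $0$. Writing the $q$-integral as a series and applying the Cauchy--Schwarz inequality in $L_q^2\left((0,1);\tfrac{x(-x^2q^2;q^2)_\infty}{(-x^2q^{2\alpha+4};q^2)_\infty}d_qx\right)$ gives
\[
|G(\lambda)|\le \|g\|\left(\int_0^1\frac{x(-x^2q^2;q^2)_\infty}{(-x^2q^{2\alpha+4};q^2)_\infty}\,|J_{\alpha+1}(x,\lambda;q^2)|^2\,d_qx\right)^{1/2}.
\]
For $0<x\le 1$ one has the uniform estimate $\left|(-\tfrac{1}{x^2};q^2)_k\,x^{2k}\right|=\prod_{i=0}^{k-1}(x^2+q^{2i})\le(-1;q^2)_\infty$, so the defining series of $J_{\alpha+1}$ yields
$|J_{\alpha+1}(x,\lambda;q^2)|\le(-1;q^2)_\infty\sum_{k\ge 0}\tfrac{q^{2\binom{k}{2}+2k(\alpha+2)}}{(q^2,q^{2\alpha+4};q^2)_k}\,|\lambda|^{2k}=:\Theta(|\lambda|)$, uniformly in $x\in(0,1]$ (these denominators are positive and bounded below since $\alpha>-\tfrac32$). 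Because $\int_0^1\tfrac{x(-x^2q^2;q^2)_\infty}{(-x^2q^{2\alpha+4};q^2)_\infty}d_qx<\infty$, we get $|G(\lambda)|\le C\,\Theta(|\lambda|)$; the $k$-th coefficient of $\Theta$ is $O\!\left(q^{2\binom{k}{2}}\right)$, so by Jacobi's triple product — the very computation made in the proof of Lemma~3.4 — $\Theta$ is entire of order $0$, whence $\rho(G)=0$. The hypothesis $\alpha>-\tfrac32$, i.e.\ $\alpha+1>-\tfrac12$, is exactly what makes Lemma~3.4 available for the index $\alpha+1$ appearing inside $G$.

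Next, by the same majorant (or by Lemma~3.4 with $a=1$) the function $\lambda\mapsto J_\alpha(1,\lambda;q^2)$ is entire of order $0$; it is even, and by Corollary~3.2 and Lemma~3.3 its nonzero zeros are real and simple, namely $\pm j_1^\alpha,\pm j_2^\alpha,\dots$. Hence the only possible poles of $h=G/J_\alpha(1,\cdot;q^2)$ sit at the simple points $\pm j_n^\alpha$. In the setting in which this lemma is used, $g$ is orthogonal to every $J_{\alpha+1}(\cdot,j_n^\alpha;q^2)$, so that $G(j_n^\alpha)=\langle g,J_{\alpha+1}(\cdot,j_n^\alpha;q^2)\rangle=0$ for all $n$; each candidate pole is then cancelled by a zero of $G$, and $h$ extends to an entire function. (This orthogonality is the one ingredient used beyond $g\in L_q^2$.)

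It remains to prove $\rho(h)=0$. Since $G$ and $J_\alpha(1,\cdot;q^2)$ are both entire of order $0$, the minimum-modulus theorem for entire functions of order zero provides, for every $\eps>0$, a sequence $R_N\to\infty$ with $|J_\alpha(1,z;q^2)|\ge\exp(-|z|^\eps)$ on the circles $|z|=R_N$; on these circles $|h(z)|\le|G(z)|\exp(|z|^\eps)\le\exp(2|z|^\eps)$ for $N$ large, and, $h$ being entire, the maximum-modulus principle extends this bound to all of $\mathbb{C}$. As $\eps>0$ was arbitrary, $\rho(h)=0$. Alternatively, both the lower bound for $J_\alpha(1,z;q^2)$ and the upper bound for $G(z)$ can be read off explicitly on the circles $|z|=q^{-(N+1/2)}$ from Jacobi's triple product, exactly as in the proof of Lemma~3.4. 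The point requiring the most care is the uniformity in $x\in(0,1]$ of the order-$0$ bound for $G$ — supplied by $\prod_{i=0}^{k-1}(x^2+q^{2i})\le(-1;q^2)_\infty$ — after which the order-zero bookkeeping for the quotient is routine; if one prefers explicit circles to the abstract minimum-modulus theorem, the delicate point instead becomes the lower bound for $|J_\alpha(1,z;q^2)|$ along a full sequence of circles avoiding the (geometrically spaced) zeros $j_n^\alpha$.
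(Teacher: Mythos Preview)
Your argument is correct and follows the same overall structure as the paper: bound $G$ to show it is entire of order $0$, note that $J_\alpha(1,\cdot;q^2)$ is entire of order $0$, and invoke the orthogonality hypothesis $G(j_n^\alpha)=0$ (which the paper also imports from the ambient Theorem~4.4) to make the quotient entire. Your bound on $G$ via Cauchy--Schwarz together with the uniform estimate $\prod_{i=0}^{k-1}(x^2+q^{2i})\le(-1;q^2)_\infty$ is in fact more careful than the paper's somewhat telegraphic $M(r;G)\le M(r;J_{\alpha+1})\cdot\|g\|_1$.

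The genuine difference is in the last step, where you must pass from ``$G$ and $J_\alpha(1,\cdot;q^2)$ both have order $0$'' to ``$h=G/J_\alpha(1,\cdot;q^2)$ has order $0$.'' The paper does this algebraically: both functions, being of order (hence genus) zero, admit Hadamard canonical products $\prod(1-\lambda^2/\lambda_n^2)$; the hypothesis cancels every factor of the denominator against one of the numerator, and the remaining genus-zero product is again of order $0$. You instead argue analytically, using the minimum-modulus theorem for order-zero functions to produce circles $|z|=R_N$ on which $|J_\alpha(1,z;q^2)|\ge\exp(-|z|^\varepsilon)$, and then the maximum principle. Both routes are standard; the paper's canonical-product argument is shorter and avoids the need to locate good circles, while your approach has the advantage of not requiring any structural information about the zero sets beyond their simplicity and the vanishing $G(j_n^\alpha)=0$.
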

\begin{proof}
We first show that $G(\lambda)$ is entire of order $0$. From the definition
of the $q$-integral we have
\begin{equation}
G(\lambda)=(1-q)\sum\limits_{k=0}^{\infty}\frac{q^{k}(-q^{2k+2};q^{2})_{\infty}}{(-q^{2k+2\alpha+4};q^{2})_{\infty}}g(q^{k})J_{\alpha+1}(q^{k},\lambda;q^{2})q^{k}.
\end{equation}
The series in (4.2) converges uniformly in any disk $|\lambda |\leq
R $. Hence $G(\lambda)$ is entire and we have $$M(r;G)\leq
M(r;\lambda\rightarrow
J_{\alpha+1}(x,\lambda;q^{2}))\int_{0}^{1}\frac{x(-x^{2}q^{2};q^{2})_{\infty}}{(-x^{2}q^{2\alpha+4};q^{2})_{\infty}}|g(x)|d_{q}x.$$
Since \ $\rho(\lambda\rightarrow J_{\alpha+1}(x,\lambda;q^{2}))=0$ \ we have that $\rho(G)=0.$ \\
Both the numerator and the denominator of \ $h(\lambda)$ \ are entire functions of order \ $0$. \ If we write a factor of $G(\lambda)$ and $J_{\alpha}(1,\lambda;q^{2})$ as canonical products, each factor of  $J_{\alpha}(1,\lambda;q^{2})$ divides out with a factor of $G(\lambda)$  by the hypothesis of theorem 4.4  $h(\lambda)$ is thus entire of order $0$.
\end{proof}
\begin{lemma}
The quotient $\frac{J_{\alpha+1}(q^{m},\lambda;q^{2})}{J_{\alpha}(1,\lambda;q^{2})}$ \ is bounded on the  imaginary axis for \ $m\in \mathbb{N}.$
\end{lemma}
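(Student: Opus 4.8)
\emph{Proof plan.} The plan is to work directly on the imaginary axis, writing $\lambda=it$ with $t\in\mathbb{R}$, and to compare the two big $q$-Bessel functions coefficient by coefficient as power series in $t^{2}$. Substituting $\lambda=it$ into the series definition of $J_{\alpha+1}(q^{m},\lambda;q^{2})$ and of $J_{\alpha}(1,\lambda;q^{2})$, and using $(\lambda x)^{2k}=(itx)^{2k}=(-1)^{k}(tx)^{2k}$, the factor $(-1)^{k}$ cancels the one already present in the series. Hence both functions become power series in $t^{2}$ with nonnegative coefficients: with
$$a_{k}=\frac{q^{2\binom{k}{2}+2k(\alpha+2)}}{(q^{2},q^{2\alpha+4};q^{2})_{k}}\,(-q^{-2m};q^{2})_{k}\,q^{2mk},\qquad b_{k}=\frac{q^{2\binom{k}{2}+2k(\alpha+1)}}{(q^{2},q^{2\alpha+2};q^{2})_{k}}\,(-1;q^{2})_{k},$$
one has $J_{\alpha+1}(q^{m},it;q^{2})=\sum_{k\ge0}a_{k}t^{2k}$ and $J_{\alpha}(1,it;q^{2})=\sum_{k\ge0}b_{k}t^{2k}$ with $a_{k}>0$, $b_{k}>0$ (here $\alpha>-1$ makes $(q^{2\alpha+2};q^{2})_{k}>0$, and $(-1/x^{2};q^{2})_{k}>0$ for real $x\neq0$).

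The core step is the termwise estimate $a_{k}\le b_{k}$ for every $k$. Forming the quotient, it factors as
$$\frac{a_{k}}{b_{k}}=q^{2k}\cdot\frac{(q^{2\alpha+2};q^{2})_{k}}{(q^{2\alpha+4};q^{2})_{k}}\cdot\frac{q^{2mk}(-q^{-2m};q^{2})_{k}}{(-1;q^{2})_{k}}.$$
The first factor is $\le1$; the second telescopes to $(1-q^{2\alpha+2})/(1-q^{2\alpha+2+2k})\le1$ since $\alpha>-1$ forces $q^{2\alpha+2+2k}\le q^{2\alpha+2}<1$. For the third factor I would distribute $q^{2m}$ over the $k$ factors of $(-q^{-2m};q^{2})_{k}$, which gives $q^{2mk}(-q^{-2m};q^{2})_{k}=\prod_{i=0}^{k-1}(q^{2m}+q^{2i})$, so the third factor equals $\prod_{i=0}^{k-1}\frac{q^{2m}+q^{2i}}{1+q^{2i}}\le1$ because $q^{2m}\le1$. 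Multiplying, $a_{k}/b_{k}\le1$.

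Finally, since $a_{k}\ge0$, $b_{k}\ge0$ and $b_{0}=1$, for every $t\in\mathbb{R}$,
$$0<J_{\alpha+1}(q^{m},it;q^{2})=\sum_{k\ge0}a_{k}t^{2k}\le\sum_{k\ge0}b_{k}t^{2k}=J_{\alpha}(1,it;q^{2}),$$
so the quotient is bounded on the imaginary axis, in fact by $1$ and uniformly in $m$; in particular the denominator never vanishes there. I do not expect a real obstacle: the only points that require care are the identity $q^{2mk}(-q^{-2m};q^{2})_{k}=\prod_{i=0}^{k-1}(q^{2m}+q^{2i})$ and the observation that once the two series are written with positive coefficients the denominator is automatically $\ge1$, both of which are immediate.
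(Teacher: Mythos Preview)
Your argument is correct and follows essentially the same approach as the paper: write both functions on the imaginary axis as power series in $t^{2}$ with nonnegative coefficients, use the identity $q^{2mk}(-q^{-2m};q^{2})_{k}=\prod_{i=0}^{k-1}(q^{2m}+q^{2i})$, and compare termwise. The only difference is organizational: the paper bounds numerator and denominator separately against the intermediate series $\sum_{n}\frac{q^{2\binom{n}{2}+2n(\alpha+1)}}{(q^{2},q^{2\alpha+2};q^{2})_{n}}\mu^{2n}$, obtaining the bound $(-1;q^{2})_{\infty}$, whereas your direct ratio $a_{k}/b_{k}\le1$ yields the sharper uniform bound $1$.
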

\begin{proof}
We will make use of the simple inequalities
\begin{eqnarray*} (-q^{-2m};q^{2})_{n}q^{2nm}&=&\prod\limits_{j=0}^{n-1}(q^{2m}+q^{2j})\\&\leq&\prod\limits_{j=0}^{\infty}(1+q^{2j})\\&=&(-1;q^{2})_{\infty}, \ m\in \mathbb{N},
\end{eqnarray*}
$$1-q^{2n+2\alpha+2}>1-q^{2\alpha+2},$$
and
$$(-1;q^{2})_{n}\geq1.$$
We get for $\lambda = i\mu, \mu$ real,
\begin{eqnarray*}
J_{\alpha+1}(q^{m},i\mu;q^{2})&=&\sum\limits_{n=0}^{\infty}\frac{q^{2(_{2}^{n})+2n(\alpha+2)}}
{(q^{2},q^{2\alpha+4};q^{2})_{n}}
(-q^{-2m};q^{2})_{n}q^{2nm}\mu^{2n},\\ &\leq& \sum\limits_{n=0}^{\infty}\frac{q^{2(_{2}^{n})+2n(\alpha+1)}}{(q^{2},q^{2\alpha+2};q^{2})_{n}}\frac{1-q^{2\alpha+2}}
{1-q^{2\alpha+2+2n}}(-1;q^{2})_{\infty}\mu^{2n}, \\
&<&\sum\limits_{n=0}^{\infty}\frac{q^{2(_{2}^{n})+2n(\alpha+1)}}{(q^{2},q^{2\alpha+2};q^{2})_{n}}
(-1;q^{2})_{\infty}\mu^{2n}.
\end{eqnarray*}
\begin{eqnarray*}
J_{\alpha}(1,i\mu;q^{2})&=&\sum\limits_{n=0}^{\infty}\frac{q^{2(_{2}^{n})+2n(\alpha+1)}}{(q^{2},q^{2\alpha+2};q^{2})_{n}}
(-1;q^{2})_{n}\mu^{2n},\\
&\geq&\sum\limits_{n=0}^{\infty}\frac{q^{2(_{2}^{n})+2n(\alpha+1)}}{(q^{2},q^{2\alpha+2};q^{2})_{n}}\mu^{2n}.
\end{eqnarray*}
Thus we have $$ 0\leq\frac{J_{\alpha+1}(q^{m},i\mu;q^{2})}{J_{\alpha}(1,i\mu;q^{2})}\leq (-1;q^{2})_{\infty}.$$
\end{proof}
\begin{theorem}
Let \ $\alpha >-\frac{3}{2}$ \ and \ $g(x)\in L_{q}^{1}(0,1)$. \ If
$$\int_{0}^{1}\frac{x(-x^{2}q^{2};q^{2})_{\infty}}{(-x^{2}q^{2\alpha+4};q^{2})_{\infty}}g(x)J_{\alpha+1}(x,j_{n}^{\alpha};q^{2})d_{q}x =0, \ n=0,1,2,... \ ,  $$
then \ $ g(q^{m})=0 $ \ for \ $m=0,1,2,...$  .
\end{theorem}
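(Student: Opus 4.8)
The plan is to run the Phragm\'en--Lindel\"of and Liouville argument of \cite{Koelink} on the auxiliary function $h(\lambda)=G(\lambda)/J_{\alpha}(1,\lambda;q^{2})$ introduced just before Lemma~4.2. First I would observe that the hypothesis says exactly that $G$ vanishes at every positive zero $j_{n}^{\alpha}$ of $J_{\alpha}(1,\cdot;q^{2})$; since $G$ and $J_{\alpha}(1,\cdot;q^{2})$ are even in $\lambda$ and $J_{\alpha}(1,0;q^{2})=1\neq0$, the zero set of $J_{\alpha}(1,\cdot;q^{2})$ is $\{\pm j_{n}^{\alpha}\}$, all of them simple by Lemma~3.4 and all annihilated by $G$, so $h$ is entire; by Lemma~4.2 (whose proof needs only $\int_{0}^{1}\frac{t(-t^{2}q^{2};q^{2})_{\infty}}{(-t^{2}q^{2\alpha+4};q^{2})_{\infty}}|g(t)|\,d_{q}t<\infty$, which holds because $g\in L_{q}^{1}(0,1)$) it has order $0$. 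The goal is now to force $h\equiv0$.

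Next I would bound $h$ on the imaginary axis. Expanding the $q$-integral as a series, $G(\lambda)=(1-q)\sum_{k\ge0}w_{k}\,g(q^{k})\,J_{\alpha+1}(q^{k},\lambda;q^{2})$ with $w_{k}=\frac{q^{2k}(-q^{2k+2};q^{2})_{\infty}}{(-q^{2k+2\alpha+4};q^{2})_{\infty}}$, so for real $\mu$ one has $h(i\mu)=(1-q)\sum_{k\ge0}w_{k}\,g(q^{k})\,\frac{J_{\alpha+1}(q^{k},i\mu;q^{2})}{J_{\alpha}(1,i\mu;q^{2})}$. By Lemma~4.3 each quotient is bounded in modulus by $(-1;q^{2})_{\infty}$ uniformly in $\mu$, while $\sum_{k}w_{k}|g(q^{k})|<\infty$ because $w_{k}=O(q^{2k})$ and $\sum_{k}q^{k}|g(q^{k})|<\infty$; hence $\sup_{\mu\in\R}|h(i\mu)|<\infty$. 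An entire function of order $<1$ that is bounded on a line is bounded on each of the two closed half-planes it bounds, by Phragm\'en--Lindel\"of, hence bounded on $\C$, hence constant by Liouville: $h\equiv c$, i.e.\ $G(\lambda)=c\,J_{\alpha}(1,\lambda;q^{2})$.

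To identify $c$ I would let $\mu\to+\infty$. Viewing $J_{\alpha+1}(q^{k},i\mu;q^{2})$ and $J_{\alpha}(1,i\mu;q^{2})$ as power series in $\mu^{2}$ with positive coefficients, the ratio of their $n$-th coefficients is $q^{2n}\,\frac{1-q^{2\alpha+2}}{1-q^{2\alpha+2+2n}}\prod_{j=0}^{n-1}\frac{q^{2k}+q^{2j}}{1+q^{2j}}$, which tends to $0$; since a quotient of two such series whose coefficient-ratios tend to $0$ itself tends to $0$ as the argument $\to+\infty$, each $\frac{J_{\alpha+1}(q^{k},i\mu;q^{2})}{J_{\alpha}(1,i\mu;q^{2})}\to0$, and dominated convergence (the summable bound $(-1;q^{2})_{\infty}w_{k}|g(q^{k})|$) gives $h(i\mu)\to0$. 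Therefore $c=0$ and $G\equiv0$.

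Finally, I would read off the Taylor coefficients of $G\equiv0$ in $\lambda^{2}$ — the rearrangement is legitimate since $\sum_{k}w_{k}|g(q^{k})|<\infty$ and $|J_{\alpha+1}(q^{k},\lambda;q^{2})|\le(-1;q^{2})_{\infty}\sum_{n\ge0}\frac{q^{2\binom{n}{2}+2n(\alpha+2)}}{(q^{2},q^{2\alpha+4};q^{2})_{n}}|\lambda|^{2n}$ uniformly in $k$ — getting, after dividing out the nonzero $n$-dependent prefactor, $\sum_{k\ge0}w_{k}\,g(q^{k})\prod_{j=0}^{n-1}(q^{2k}+q^{2j})=0$ for all $n\ge0$. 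Since $\{\prod_{j=0}^{n-1}(X+q^{2j})\}_{n\ge0}$ consists of polynomials of degrees $0,1,2,\dots$ and hence spans $\C[X]$, this is equivalent to $\sum_{k\ge0}w_{k}\,g(q^{k})\,q^{2kj}=0$ for all $j\ge0$; letting $j\to+\infty$ and using dominated convergence peels off $w_{0}\,g(1)=0$, and dividing by $q^{2j}$ and iterating gives $w_{m}\,g(q^{m})=0$, hence $g(q^{m})=0$ for every $m$ since $w_{m}>0$. I expect the real work to be in the middle steps: extracting a genuine bound on $i\R$ from Lemma~4.3, checking the Phragm\'en--Lindel\"of hypotheses for an order-$0$ function on a half-plane, and justifying the limit $\mu\to+\infty$ that kills the constant; the opening reduction and the closing moment computation are routine.
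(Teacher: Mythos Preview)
Your proof is correct and follows essentially the same Phragm\'en--Lindel\"of/Liouville scheme as the paper, using the same auxiliary function $h=G/J_{\alpha}(1,\cdot;q^{2})$ and the same Lemmas~4.2 and~4.3. The only notable deviation is in how you kill the constant: the paper equates the $\lambda^{2n}$-coefficients of $G$ and $cJ_{\alpha}(1,\lambda;q^{2})$ and lets $n\to\infty$ (the extra $q^{2n}$ on the left forces $(-1;q^{2})_{\infty}c=0$), whereas you argue directly that $h(i\mu)\to0$ as $\mu\to+\infty$ via a coefficient-ratio lemma and dominated convergence---both are valid and equally short.
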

\begin{proof}
Lemma 4.2 implies that $h(i\mu)$  is bounded. Since $h(\lambda)$  is entire
of order $0$, we can apply one of the versions of the Phragm\'{e}n-Lindel\"{o}f
theorem, see \cite{a} and Lemma 4.2 and conclude that $h(\lambda)$ is
bounded in the entire  $\lambda$-plane. Next by Liouville's theorem we conclude
that $ h(\lambda)$ is constant. Say that \ $h(\lambda) = C$. \ We will prove that $C = 0$.
We have $$G(\lambda)=C J_{\alpha}(1,\lambda;q^{2}),$$
\begin{eqnarray*}
G(\lambda)&=&\int_{0}^{1}\frac{a(-a^{2}q^{2};q^{2})_{\infty}}{(-a^{2}q^{2\alpha+4};q^{2})_{\infty}}g(a)J_{\alpha+1}(a,\lambda;q^{2})d_{q}a,\\
&=&(1-q)\sum\limits_{k=0}^{\infty}\frac{q^{2k}(-q^{2k+2};q^{2})_{\infty}}{(-q^{2k+2\alpha+4};q^{2})_{\infty}}g(q^{k})J_{\alpha+1}(q^{k},\lambda;q^{2}),\\
&=&(1-q)\sum\limits_{k=0}^{\infty}\frac{q^{2k}(-q^{2k+2};q^{2})_{\infty}}{(-q^{2k+2\alpha+4};q^{2})_{\infty}}g(q^{k})\sum\limits_{n=0}^{\infty}
\frac{(-1)^{n}q^{2(_{2}^{n})+2n(\alpha+2)}}{(q^{2},q^{2\alpha+4};q^{2})_{n}}(-\frac{1}{q^{2k}};q^{2})_{n}q^{2kn}\lambda^{2n},\\
&=&(1-q)\sum\limits_{n=0}^{\infty}
\frac{(-1)^{n}q^{2(_{2}^{n})+2n(\alpha+2)}}{(q^{2},q^{2\alpha+4};q^{2})_{n}}\sum\limits_{k=0}^{\infty}\frac{q^{2k}
(-q^{2k+2};q^{2})_{\infty}}{(-q^{2k+2\alpha+4};q^{2})_{\infty}}g(q^{k})(-\frac{1}{q^{2k}};q^{2})_{n}q^{2kn}\lambda^{2n},
\end{eqnarray*}
and
\begin{eqnarray*}
J_{\alpha}(1,\lambda;q^{2})=\sum\limits_{n=0}^{\infty}\frac{(-1)^{n}q^{2(_{2}^{n})+2n(\alpha+1)}}{(q^{2},q^{2\alpha+2};q^{2})_{n}}(-1;q^{2})_{n}\lambda^{2n}.
\end{eqnarray*}
It follows that
$$(1-q)\frac{(-1)^{n}q^{2(_{2}^{n})+2n(\alpha+1)}}{(q^{2},q^{2\alpha+4};q^{2})_{n}}q^{2n}
\sum\limits_{k=0}^{\infty}\frac{q^{2k}(-q^{2k+2};q^{2})_{\infty}}{(-q^{2k+2\alpha+4};q^{2})_{\infty}}g(q^{k})(-\frac{1}{q^{2k}};q^{2})_{n}q^{2kn}$$
$$=\frac{(-1)^{n}q^{2(_{2}^{n})+2n(\alpha+1)}}{(q^{2},q^{2\alpha+2};q^{2})_{n}}(-1;q^{2})_{n}C, \ \ \ n=0,1,2,... \ .$$
Dividing out common factors
then we have
$$(1-q)\frac{1-q^{2\alpha+2}}{1-q^{2\alpha+2n+2}} q^{2n}\sum\limits_{k=0}^{\infty}\frac{q^{2k}
(-q^{2k+2};q^{2})_{\infty}}{(-q^{2k+2\alpha+4};q^{2})_{\infty}}g(q^{k})(-\frac{1}{q^{2k}};q^{2})_{n}q^{2kn}$$
$$=(-1;q^{2})_{n}C, \ \ \ n=0,1,2,...\ $$
and letting $n\rightarrow\infty$ gives $$(-1;q^{2})_{\infty}C=0,$$ then $$C = 0.$$ We can
now conclude that $$G(\lambda) =0,$$ or that is
$$\int_{0}^{1}\frac{x(-x^{2}q^{2};q^{2})_{\infty}}{(-x^{2}q^{2\alpha+4};q^{2})_{\infty}}g(x)J_{\alpha+1}(x,j_{n}^{\alpha};q^{2})d_{q}x=0.$$
We complete the proof with a simple argument that gives $g (q^{m}) = 0,
m = 0,1, . . . $.\\
If $$G(\lambda) = 0,$$
then
$$\sum\limits_{k=0}^{\infty}\frac{q^{2k}(-q^{2k+2};q^{2})_{\infty}}{(-q^{2k+2\alpha+2};q^{2})_{\infty}}g(q^{k})(-\frac{1}{q^{2k}};q^{2})_{n}q^{2kn}=0.$$
Letting  $n\rightarrow\infty$  gives  $g(1) = 0$.  Then dividing by  $q^{2n}$ and again letting $n\rightarrow\infty$ gives $g(q) = 0.$  Continuing this process we have  $g (q^{m})=0$ and
the proof of the theorem is complete.
\end{proof}
\section{Fourier big $q$-Bessel series}
Using the orthogonality relation (4.1), we consider the big $q$-Fourier-Bessel
series, $S_{q}^{(\alpha)}[f]$,  associated with a function $f$,
\begin{equation}
S_{q}^{(\alpha)}[f](x)=\sum\limits_{k=1}^{\infty}a_{k}(f)J_{\alpha+1}(x,j_{k}^{\alpha};q^{2}),
\end{equation}
with the coefficients $a_{k}$ given by
\begin{equation}
a_{k}(f)=\frac{1}{\mu_{k}}\int_{0}^{1}\frac{t(-t^{2}q^{2};q^{2})_{\infty}}{(-t^{2}q^{2\alpha+4};q^{2})_{\infty}}f(t)J_{\alpha+1}(t,j_{k}^{\alpha};q^{2})d_{q}t,
\end{equation}
where
\begin{eqnarray*}
\mu_{k}&=&\|J_{\alpha+1}(x,j_{k}^{\alpha};q^{2})\|_{L_{q}^{2}(0,1)}^{2},
\\&=&
\int_{0}^{1}\frac{x(-x^{2}q^{2};q^{2})_{\infty}}{(-x^{2}q^{2\alpha+4};q^{2})_{\infty}}\big[J_{\alpha+1}(x,j_{k}^{\alpha};q^{2})\big]^{2}d_{q}x.
\end{eqnarray*}
\section{Sampling theorem}
The classical Kramer sampling is as follows \cite{e, h}. Let $ K(x;
\lambda)$ be a function, continuous in \ $\lambda $ \ such that, as
a function of  $x$; $K(x; \lambda)\in L^{2}(I )$  for every real
number $\lambda$, where  $I$ is an interval of the real line. Assume
that there exists a sequence of real numbers \ ${\lambda_{n}}$, with
$ n $  belonging to an indexing set contained in \ $\mathbb{Z},$ \
such that ${K(x; \lambda_{n})}$ \ is a complete orthogonal sequence
of functions of  $L^{2}(I).$ Then for any $F$ of the form
$$F(\lambda)=\int_{I}f(x)K(x,\lambda)dx,$$
where \ $F \in L^{2}(I), $ \ we have
\begin{equation}
F(\lambda)=\lim_{N\rightarrow\infty}\sum_{|n|\leq N}F(\lambda_{n})S_{n}(\lambda),\label{s}
\end{equation}
with $$S_{n}(t)=\frac{\int_{I}K(x,\lambda)\overline{K(x,\lambda_{n})}dx}{\int_{I}|K(x,\lambda_{n})|^{2}dx} $$
The series \eqref{s} converges uniformly wherever \ $ ||K(. , \lambda)||_{L^{2}(I)} $ \ is bounded.\\
Now we give a $q$-sampling theorem for the $q$-integral transform of the form
$$F(\lambda)=\int_{0}^{1}\frac{x(-x^{2}q^{2};q^{2})_{\infty}}{(-x^{2}q^{2\alpha+4};q^{2})_{\infty}}f(x)J_{\alpha+1}(x,\lambda;q^{2})d_{q}x, \hspace{1cm} f\in L^{2}_{q}(0,1), \hspace{1cm} \alpha>-\frac{3}{2}.$$
\begin{theorem}
Let $f$ be a function in $L_{q}^{2}((0,1);\frac{x(-x^{2}q^{2};q^{2})_{\infty}}{(-x^{2}q^{2\alpha+4};q^{2})_{\infty}}d_qx)$. Then the $q$-integral transform
\begin{equation}
F(\lambda)=\int_{0}^{1}\frac{x(-x^{2}q^{2};q^{2})_{\infty}}{(-x^{2}q^{2\alpha+4};q^{2})_{\infty}}f(x)J_{\alpha+1}(x,\lambda;q^{2})d_{q}x,\hspace{1cm} \alpha>-\frac{3}{2}.
\end{equation}
has the point-wise convergent sampling expansion
\begin{equation}
F(\lambda)=\sum_{k=1}^{\infty}2F(j_{k}^{\alpha})\frac{j_{k}^{\alpha}J_{\alpha+1}(1,\lambda;q^{2})}{(\lambda^{2}-(j_{k}^{\alpha})^{2})[\frac{\partial J_{\alpha+1}}{\partial\lambda}(1,\lambda;q^{2})]_{\lambda=j_{k}^{\alpha}}}.\label{ss}
\end{equation}
The series \eqref{ss} converges uniformly over any compact subset of $\mathbb{C}$.
\end{theorem}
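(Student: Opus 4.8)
The plan is to obtain \eqref{ss} as an instance of Kramer's sampling theorem, taking for kernel $K(x,\lambda)=J_{\alpha+1}(x,\lambda;q^{2})$ on $I=(0,1)$, for weight $w(x)=\frac{x(-x^{2}q^{2};q^{2})_{\infty}}{(-x^{2}q^{2\alpha+4};q^{2})_{\infty}}$, and for sampling nodes the positive zeros $j_{k}^{\alpha}$ of $\lambda\mapsto J_{\alpha}(1,\lambda;q^{2})$. First I would note that $J_{\alpha+1}(x,\lambda;q^{2})$ has real Taylor coefficients, so $\overline{J_{\alpha+1}(x,\lambda;q^{2})}=J_{\alpha+1}(x,\overline{\lambda};q^{2})$ and $F(\lambda)=\langle f,\,J_{\alpha+1}(\cdot,\overline{\lambda};q^{2})\rangle$ for the inner product $\langle\cdot,\cdot\rangle$ of Section~4; and that for every fixed $\lambda\in\mathbb{C}$ the map $x\mapsto J_{\alpha+1}(x,\overline{\lambda};q^{2})$ lies in $L_{q}^{2}((0,1);w\,d_{q}x)$, since $\int_{0}^{1}w\,|J_{\alpha+1}(\cdot,\overline{\lambda};q^{2})|^{2}d_{q}x=(1-q)\sum_{k\ge0}w(q^{k})q^{k}|J_{\alpha+1}(q^{k},\overline{\lambda};q^{2})|^{2}$ with $w(q^{k})q^{k}=O(q^{2k})$, while the estimate $(-q^{-2k};q^{2})_{n}q^{2kn}\le(-1;q^{2})_{\infty}$ already used in Lemma~4.3 produces a $k$-independent majorant of the defining series of $J_{\alpha+1}(q^{k},\,\cdot\,;q^{2})$ on each disc.

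Next I would expand the kernel in the orthogonal basis. By Theorem~4.4 the system $\{J_{\alpha+1}(\cdot,j_{k}^{\alpha};q^{2})\}_{k\ge1}$ is complete in $L_{q}^{2}((0,1);w\,d_{q}x)$, and by Proposition~4.1 it is orthogonal with squared norms $\mu_{k}$ given by \eqref{orthogonal} at $a=1$, $\lambda=j_{k}^{\alpha}$; note $\mu_{k}>0$ and, by Lemma~3.3, $\big[\frac{\partial J_{\alpha}}{\partial\mu}(1,\mu;q^{2})\big]_{\mu=j_{k}^{\alpha}}\neq0$, hence $J_{\alpha+1}(q^{-1},j_{k}^{\alpha};q^{2})\neq0$. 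Expanding $J_{\alpha+1}(x,\overline{\lambda};q^{2})=\sum_{k\ge1}\mu_{k}^{-1}\langle J_{\alpha+1}(\cdot,\overline{\lambda};q^{2}),J_{\alpha+1}(\cdot,j_{k}^{\alpha};q^{2})\rangle\,J_{\alpha+1}(x,j_{k}^{\alpha};q^{2})$ in $L_{q}^{2}$, applying the continuous functional $\langle f,\,\cdot\,\rangle$, and using $\langle f,\,J_{\alpha+1}(\cdot,j_{k}^{\alpha};q^{2})\rangle=F(j_{k}^{\alpha})$, yields
$$F(\lambda)=\sum_{k=1}^{\infty}\frac{F(j_{k}^{\alpha})}{\mu_{k}}\int_{0}^{1}w(x)\,J_{\alpha+1}(x,\lambda;q^{2})\,J_{\alpha+1}(x,j_{k}^{\alpha};q^{2})\,d_{q}x .$$
I would then evaluate the $q$-integral via Proposition~\ref{orth} with $a=1$ and $\mu=j_{k}^{\alpha}$: since $J_{\alpha}(1,j_{k}^{\alpha};q^{2})=0$, only the summand containing $J_{\alpha}(1,\lambda;q^{2})$ survives, so the integral equals $\frac{(1-q)(1-q^{2\alpha+2})}{q^{2\alpha+2}}\frac{(-1;q^{2})_{\infty}}{(-q^{2\alpha+2};q^{2})_{\infty}}\,J_{\alpha+1}(q^{-1},j_{k}^{\alpha};q^{2})\,\frac{J_{\alpha}(1,\lambda;q^{2})}{\lambda^{2}-(j_{k}^{\alpha})^{2}}$. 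Dividing by $\mu_{k}$ from \eqref{orthogonal} cancels all the constants as well as the factor $J_{\alpha+1}(q^{-1},j_{k}^{\alpha};q^{2})$, and leaves the sampling functions
$$S_{k}(\lambda)=\frac{2\,j_{k}^{\alpha}\,J_{\alpha}(1,\lambda;q^{2})}{\big(\lambda^{2}-(j_{k}^{\alpha})^{2}\big)\,\big[\frac{\partial J_{\alpha}}{\partial\mu}(1,\mu;q^{2})\big]_{\mu=j_{k}^{\alpha}}} ,$$
so that $F(\lambda)=\sum_{k\ge1}F(j_{k}^{\alpha})\,S_{k}(\lambda)$, which is the expansion \eqref{ss}.

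It remains to justify the convergence. For each fixed $\lambda$, Cauchy--Schwarz bounds $\big|F(\lambda)-\sum_{k\le N}F(j_{k}^{\alpha})S_{k}(\lambda)\big|$ by $\|f\|$ times the $L_{q}^{2}$-distance from $J_{\alpha+1}(\cdot,\overline{\lambda};q^{2})$ to the $N$-th partial sum of its own Fourier--big-$q$-Bessel expansion, which tends to $0$ by completeness; this gives pointwise convergence. For uniform convergence on a compact set $\mathcal{K}\subset\mathbb{C}$, I would first establish that $\lambda\mapsto J_{\alpha+1}(\cdot,\overline{\lambda};q^{2})$ is continuous (indeed holomorphic) from $\mathbb{C}$ into $L_{q}^{2}((0,1);w\,d_{q}x)$, by dominated convergence applied to the $q$-integral series using the $k$-independent majorant above. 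Then $\lambda\mapsto\|J_{\alpha+1}(\cdot,\overline{\lambda};q^{2})\|^{2}$ is continuous and equals the sum over $k$ of the non-negative continuous functions $\mu_{k}^{-1}|\langle J_{\alpha+1}(\cdot,\overline{\lambda};q^{2}),J_{\alpha+1}(\cdot,j_{k}^{\alpha};q^{2})\rangle|^{2}$, so Dini's theorem forces this series to converge uniformly on $\mathcal{K}$; hence the squared $L_{q}^{2}$-tails tend to $0$ uniformly on $\mathcal{K}$, and a final Cauchy--Schwarz estimate transfers this to \eqref{ss}. I expect this last point --- upgrading the $L^{2}$-convergence of the eigenfunction expansion to uniform convergence of the sampling series on complex compacta --- to be the main obstacle, since it forces one to verify the $L_{q}^{2}$-continuity of the kernel in the spectral variable and to invoke a Dini (or normal-families) argument; the closed-form evaluation of $S_{k}$, by contrast, is a routine substitution into Proposition~\ref{orth} and \eqref{orthogonal}.
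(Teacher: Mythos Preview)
Your approach is essentially the one the paper takes: apply Kramer's sampling theorem with kernel $K(x,\lambda)=J_{\alpha+1}(x,\lambda;q^{2})$, use the completeness and orthogonality from Section~4 to justify the expansion, and then evaluate the sampling functions by substituting Proposition~\ref{orth} (with $a=1$, $\mu=j_k^{\alpha}$) and formula~\eqref{orthogonal}. The paper's proof is much terser --- it simply cites Kramer, notes that analyticity of $J_{\alpha+1}$ makes $\|K(\cdot,\lambda)\|_{2}$ bounded on compacta (which is the hypothesis for uniform convergence in the Kramer statement quoted just before the theorem), and then says ``substituting from (3.1) and (3.4) we obtain (6.3)''. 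Your Dini/continuity argument for uniform convergence is more explicit than what the paper does, but not a different route; the paper is content to invoke the black-box conclusion of Kramer's theorem once $\|K(\cdot,\lambda)\|_{2}$ is bounded.

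One point worth flagging: your computation of $S_{k}$ correctly produces
\[
S_{k}(\lambda)=\frac{2\,j_{k}^{\alpha}\,J_{\alpha}(1,\lambda;q^{2})}{\big(\lambda^{2}-(j_{k}^{\alpha})^{2}\big)\,\big[\tfrac{\partial J_{\alpha}}{\partial\mu}(1,\mu;q^{2})\big]_{\mu=j_{k}^{\alpha}}},
\]
with $J_{\alpha}$ in numerator and denominator, since $j_{k}^{\alpha}$ are by definition the zeros of $J_{\alpha}(1,\cdot\,;q^{2})$ and this is exactly what cancellation of (3.1) against \eqref{orthogonal} gives. The formula \eqref{ss} as printed in the paper has $J_{\alpha+1}$ in both places; this appears to be a typographical slip in the statement (note that $S_{k}(j_{m}^{\alpha})=\delta_{km}$ only holds with $J_{\alpha}$ there, and the paper's own proof also says ``substituting from (3.1) and (3.4)'', which yields your version). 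So your derivation is the right one.
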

\begin{proof}
Set \ $K(x,\lambda)=J_{\alpha+1}(x,\lambda;q^{2}),$ \ and \
$j_{k}^{\alpha}$ \ is the k-th positive zero of \
$J_{\alpha+1}(x,\lambda;q^{2}),$ \ and \ $
\{J_{\alpha+1}(x,j_{k}^{\alpha};q^{2})\}_{k=1}^{\infty} $ \ is a
complete orthogonal sequence of function in \
$L_{q}^{2}((0,1);\frac{x(-x^{2}q^{2};q^{2})_{\infty}}{(-x^{2}q^{2\alpha+4};q^{2})_{\infty}}d_qx).$
\ Hence applying Theorem 6.1, we get
\begin{equation}
F(\lambda)=\sum_{k=1}^{\infty}F(j_{k}^{\alpha})\frac{\int_{0}^{1}\frac{x(-x^{2}q^{2};q^{2})_{\infty}}{(-x^{2}q^{2\alpha+4};q^{2})_{\infty}}
J_{\alpha+1}(x,j_{k}^{\alpha};q^{2})J_{\alpha+1}(x,\lambda;q^{2})d_{q}x}{\int_{0}^{1}\frac{x(-x^{2}q^{2};q^{2})_{\infty}}
{(-x^{2}q^{2\alpha+4};q^{2})_{\infty}}|J_{\alpha+1}(x,j_{k}^{\alpha};q^{2})|^{2}d_{q}x}.
\end{equation}
But \ $J_{\alpha+1}(x,\lambda;q^{2})$ \ is analytic on \ $\mathbb{C}$ \, so is bounded on any compact subset of \ $\mathbb{C}$ \ and hence \ $\|J_{\alpha+1}(x,\lambda;q^{2})\|_{2}$ \ is bounded. Substituting  from (3.1) with \ $\mu=j_{k}^{\alpha} $ \ and from (3.4) we obtained (6.3) and the theorem follows.
\end{proof}
\textbf{Example.}
Define a function $f$ on  $[0,1]$ to be
$$f(t)=\left\{\begin{array}{cc}
                 \frac{1}{1-q} & t=1, \\
              0& otherwise .
             \end{array}
\right.$$ Then
$$F(\lambda)=\int_{0}^{1}\frac{x(-x^{2}q^{2};q^{2})_{\infty}}{(-x^{2}q^{2\alpha+4};q^{2})_{\infty}}f(x)J_{\alpha+1}(x,\lambda;q^{2})d_{q}x=\frac{(-q^{2};q^{2})_{\infty}}
{(-q^{2\alpha+4};q^{2})_{\infty}}J_{\alpha+1}(1,\lambda;q^{2}).$$
Thus, applying Theorem 6.1 gives
\begin{equation}
\frac{1}{2}=\sum\limits_{k=1}^{\infty}\frac{j_{k}^{\alpha}
J_{\alpha+1}(1,j_{k}^{\alpha};q^{2})}{(\lambda^{2}-(j_{k}^{\alpha})^{2})[\frac{\partial J_{\alpha+1}}{\partial\lambda}(1,\lambda;q^{2})
]_{\lambda=j_{k}^{\alpha}}},  \hspace{1cm} \lambda\in \mathbb{C}\backslash \{\pm j_{k}^{\alpha}, k\in \mathbb{N^{\ast}}\}.
\end{equation}
We define the Paley-Wiener space related to the big $q$-Bessel by
$$PW_B=\{ \mathcal{F}_B(f);\,\,\ f\in L_{q}^{2}((0,1);\frac{x(-x^{2}q^{2};q^{2})_{\infty}}{(-x^{2}q^{2\alpha+4};q^{2})_{\infty}}d_qx)\},$$
where the finite big $q$-Hankel transform $\mathcal{F}_B (f)$ is defined by
\begin{equation} \mathcal{F}_B (f)(\lambda )=
\int_{0}^{1}\frac{x(-x^{2}q^{2};q^{2})_{\infty}}{(-x^{2}q^{2\alpha+4};q^{2})_{\infty}}f(x)J_{\alpha+1}(x,\lambda;q^{2})d_{q}x,\hspace{1cm} \alpha>-\frac{3}{2}.\label{equation22}
\end{equation}
By a quite similar argument as in the proof of [Theorem 1, \cite{abr}] and Theorem 4.4, we see that the space $PW_B$ equipped with the inner product $$<f,g>_{PW}=\int_{0}^{1}(\mathcal{F}_Bf)(x)\overline{(\mathcal{F}_Bg)(x)}
\frac{x(-x^{2}q^{2};q^{2})_{\infty}}
{(-x^{2}q^{2\alpha+4};q^{2})_{\infty}}d_qx$$
is a Hilbert space and the finite big $q$-Hankel transform \eqref{equation22} becomes a Hilbert space isometry between $L_{q}^{2}((0,1);\frac{x(-x^{2}q^{2};q^{2})_{\infty}}{(-x^{2}q^{2\alpha+4};q^{2})_{\infty}}d_qx)$
and $PW_A$. Therefore, from [Theorem A, \cite{abr}] we deduce that
the big $q$-Bessel function has an associated reproducing kernel.\\
$\mathbf{Acknowledgements}$\\
This research is supported by NPST Program of King Saud University, project number 10-MAT1293-02.
 

\begin{thebibliography}{10}
\bibitem{ad} L. D. Abreu, A $q$-sampling theorem related to the $q$-Hankel transform, Proc.
Amer. Math. Soc. $\mathbf{133}$ (2005), no. 4, 1197.1203.\\
\bibitem{abr}
L. D. Abreu, F. Bouzeffour, A Paley-Wiener theorem for the
Askey-Wilson function transform, Proc. Amer. Math. Soc.
$\mathbf{138}$, 2853-2862 (2010).\\
\bibitem{a}R. P. Boas, Entire functions. Academic, New Yorks(1954)  \\

\bibitem{bo2}F. Bouzeffour, A Whittaker-Shannon-Kotel$ $'nikov sampling theorem
related to the Askey-Wilson functions,
 J. Nonlinear Math. Phys. $\mathbf{14}$, no. 3, 367-380 (2007). MR 2350096  (2008h:94031).\\

\bibitem{b} P. L. Butzer Aand G. Nasri-Roudsari, Kramer's sampling theorem in signal
 analysis and its role in mathematics, in Image Processing: Mathematical Methods and Applications, Oxford University Press Oxford, 1994.\\

\bibitem{Bus} J. Bustoz, S. K. Suslov,(1998), Basic analog of Fourier series on
a $q$-quadratic grid. Methods Appl. Anal., 5(1), 1–38.\\

\bibitem{c} N. Ciccoli, E. Koelink, T. H. Koornwinder, $q$-Laguerre
polynomials and big $q$-Bessel functions and their orthogonality
relations, Methods Appl. Anal. 6 (1) (1999) 109–-127.\\
\bibitem{gasper}  G. Gasper and Rahmen, Basic Hypergeometric Series, Encyclopedia of Mathematics and its Applications $\mathbf{35}$,
Cambdridge University Prees, Cambridge, 1990.\\
\bibitem{e} J. R. Higgins, Sampling Theory in Fourier and Signal Analysis: Foundations, Oxford University Press, Oxford, 1996.\\

\bibitem{aaa} M. E. H. Ismail, Classical and Quantum Orthogonal Polynomials in one Variable,
 Cambridge University Press, 2005. \\
\bibitem{ism} M. E. H. Ismail, the zeros of basic Bessel function, the functions $J_{\nu+ax}(x)$, and as associated
orthogonal polynomials, Jornal of Mathematical Analysis and Applicatios $\mathbf{86}$ (1982), 1-19.\\
\bibitem{Koelink} H. T. Koelink and R. F. Swarttouw, On the zeros of the Hahn-Exton $q$-Bessel function and associated $q$-Lommel polynomials,
J. Math. Anal. and Appl. $\mathbf{186}$ (1994), 690-710.\\

\bibitem{h} H. P. Kramer, A generalized sampling theorem, J. Math. Phys. $\mathbf{63}$ (1957) 68–72.\\

\bibitem{wat}  G. N. Watson, A Treatise on the Theory of Bessel Functions, second edition, Cambrige University Press, Cambridge, 1966.

\end{thebibliography}
\end{document}